\begin{document}


\title{\bf\color{titleblue}{A Singular Integral Measure for \\$C^{1,1}$ and $C^1$ Boundaries}}
\author[]{Laramie Paxton\thanks{realtimemath@gmail.com} } 
\author[]{Kevin R. Vixie\thanks{vixie@speakeasy.net}}
\affil[]{Department of Mathematics and Statistics\\  Washington State University}
\date{\today}
\renewcommand\Authands{\\ \ and }

\maketitle
\abstract{The art of analysis involves the subtle combination of approximation, inequalities, and geometric intuition as well as being able to work at different scales. With this subtlety in mind, we present this paper in a manner designed for wide accessibility for both advanced undergraduate students and graduate students. The main results include a singular integral for measuring the level sets  of a $C^{1,1}$ function mapping from $\R^n$ to $\R$, that is, one whose derivative is Lipschitz continuous. We extend this to measure embedded submanifolds in $\R^2$ that are merely $C^1$ using the distance function and provide an example showing that the measure does not hold for general rectifiable boundaries.}
\tableofcontents

\section{Introduction}
Often times it happens that creative exploration leads to uncharted territory in mathematics. That is indeed the case in the development of the singular integral boundary measure for hypersurfaces in $\R^n$, presented below.
To highlight this process, we shall first explore the simple analysis problem that led to our discovery of  this integral and then present our singular integral that measures the $(n-1)$-dimensional Hausdorff measure $\Hd^{n-1}$ of level sets of $C^{1,1}$ functions, that is, those whose derivative is Lipschitz continuous.



Continuing the process of exploration and generalization, we present the same singular integral as a $C^1$-boundary measure using the distance function in $\R^2$. The arguments here are markedly different from the $C^{1,1}$ case and follow a ``by-construction,'' barehanded approach due to the fact that in this case, we may have $\epsilon$-neighborhoods of the boundary in which the normals intersect no matter how small $\epsilon$ is (i.e. we no longer have \textit{positive reach}; see Definition \ref{reach}), so we cannot use the Area Formula (see Theorem \ref{area1}).

\section{A Simple Analysis Problem}
Consider the following simple analysis problem:\par

\begin{prob}\label{simple}
For $f(x): \R\rightarrow\R$, suppose that $f(x)$ is
differentiable, $f(0) = 0$, and $\exists \lambda>0$ such that $\forall x\in \R$

     \begin{equation}\label{lam}
     \left|f'(x)\right| \leq \lambda |f(x)|.
     \end{equation}

\noindent Prove that $f(x) = 0$ everywhere.
\end{prob}

\begin{proof}[Solution 1: ODE's]
Take the families of curves in $\R^2$  $g_C=Ce^{2\lambda x}$ and $h_C=Ce^{-2\lambda x}$ as all the solutions to the differential equations $ g' = 2\lambda g$ and
$ h' = -2\lambda h$, respectively. Cleary $f(x)$ cannot simply be one of these curves since none of them are ever 0. Note that the 2 is included here so that any solution $f(x)$ that satisfies (\ref{lam}) that intersects one of these curves must cross it. Indeed, without loss of generality, suppose for $g_C$ there exists an $x_0$ such that for all $x>x_0$, $f(x)=C_0e^{2\lambda x}.$ Then we find that 
$$\frac{2\lambda C_0e^{2\lambda x}}{C_0e^{2\lambda x}}\le \lambda\Rightarrow 2\le1,$$
a contradiction. A similar argument holds for $h_C$.

Now, for $g_C$, $f(x)$ can only cross from above to below as $x$ increases. If not, then $f'(x)>C2\lambda e^{2\lambda x}=2\lambda f \Rightarrow \frac{f'}{f}>2\lambda,$ also a contradiction. Similarly, $f(x)$ can only cross $h_C$ from below to above as $x$ increases. 

Now, suppose $f(x^*)>0$ for some $x^*$ (for $f(x^*)<0$, then $-f(x)$ satisfies (\ref{lam}) and is positive at $x^*$). Pick the corresponding $C^{+*}=f(x^*)/e^{2\lambda x^*}$ for $g_{C^{+*}}$ and $C^{-*}=f(x^*)/e^{-2\lambda x^*}$ for $h_{C^{-*}}$. These curves serve as ``fences'' ($g_{C^{+*}}$ for $x<x*$ and $h_{C^{-*}}$ for  $x>x*$) that $f$ cannot cross so that  $f\neq0.$ Thus we have a contradiction which implies that $f(x)\equiv0$ everywhere (see Figures \ref{fences1}, \ref{fences22}, and \ref{fences3}).

\end{proof}

\begin{figure} [H]
\begin{center}
    \scalebox{0.6}{\input{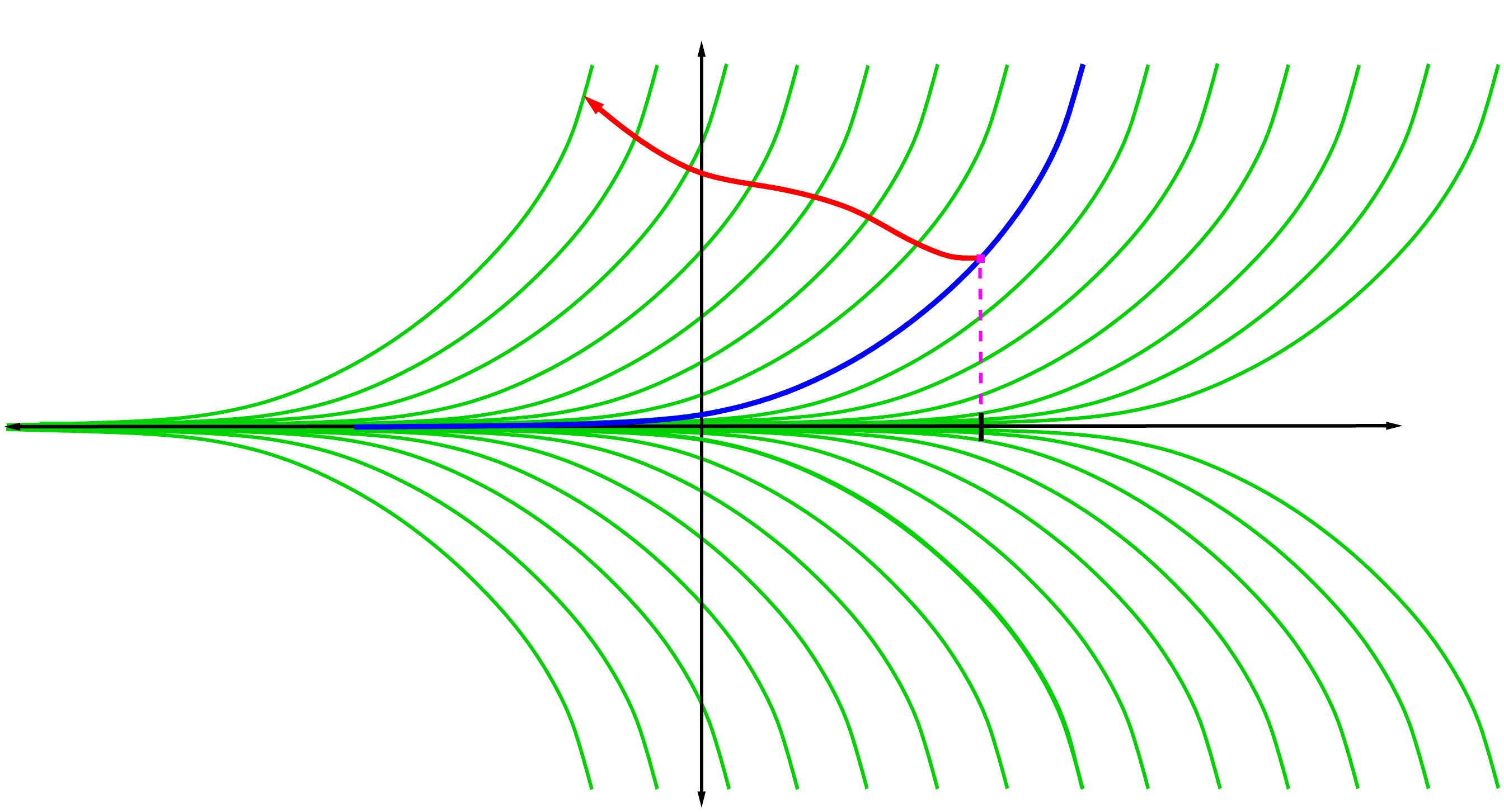_t}}
      \caption[Constructing fences I.]{The case where $f(x^*)>0$ and $g_{C^{+*}}$ acts as a ``fence'' for $f(x)$ when  $x<x*$, bounding it away from 0.} 
      \label{fences1}
 \end{center}
\end{figure}

\begin{figure} [H]
\begin{center}
    \scalebox{0.6}{\input{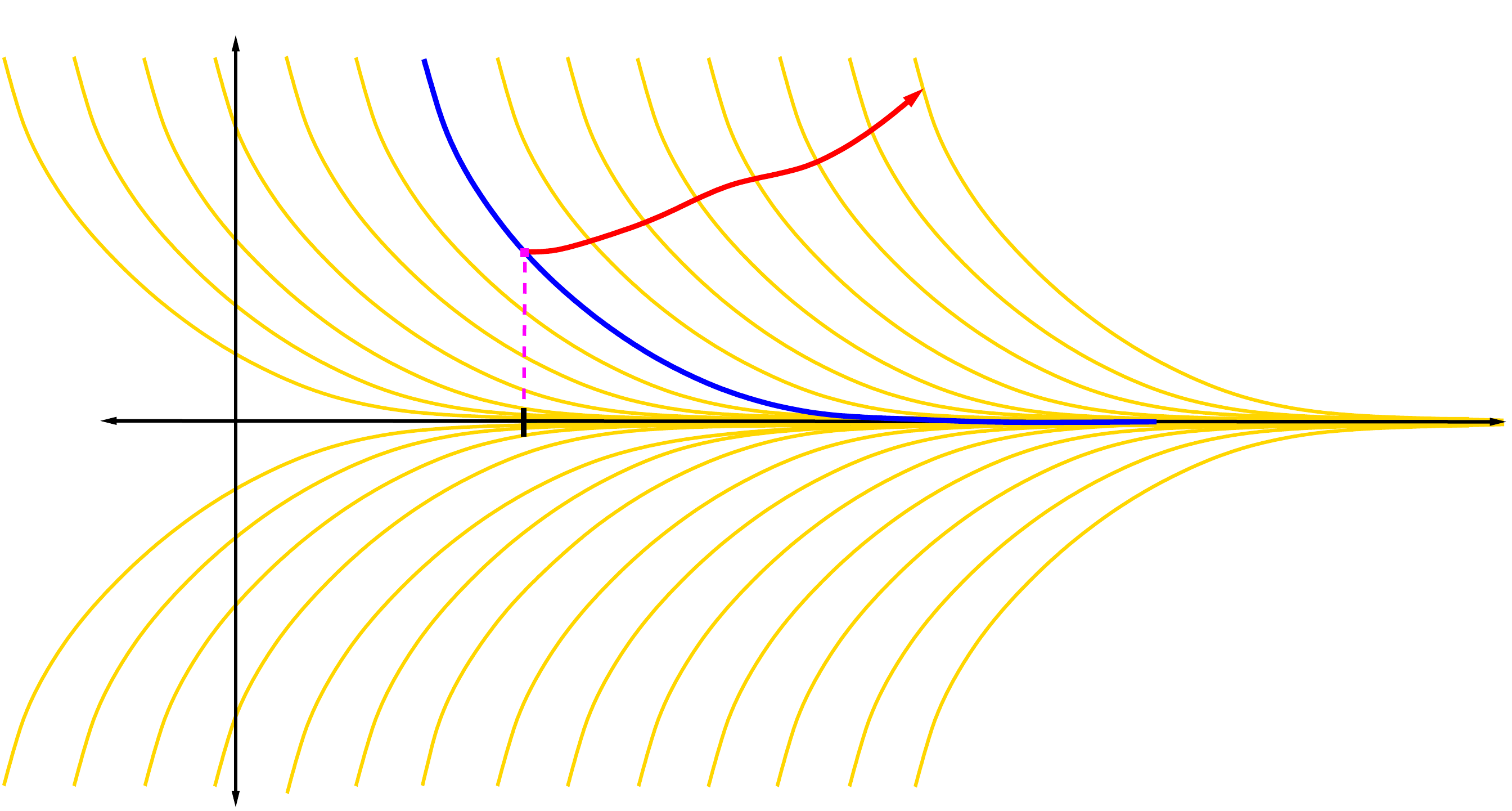_t}}
      \caption[Constructing fences II.]{The case where $f(x^*)>0$ and $h_{C^{-*}}$ acts as a ``fence'' for $f(x)$ when  $x>x*$, bounding it away from 0.} 
      \label{fences22}
 \end{center}
\end{figure}

\begin{figure} [H]
\begin{center}
    \scalebox{0.68}{\input{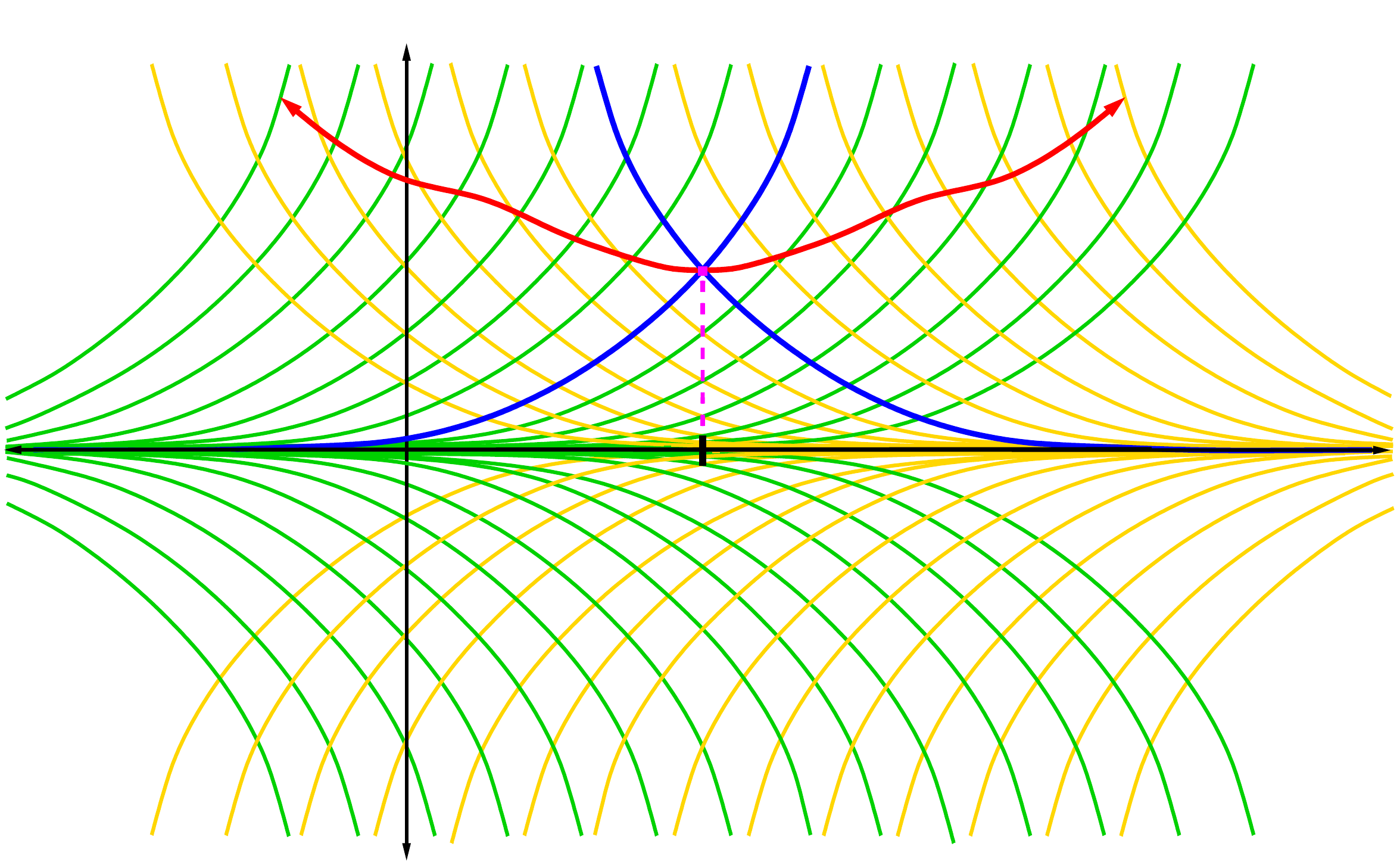_t}}
      \caption[Constructing fences III.]{The two combined cases from above.} 
      \label{fences3}
 \end{center}
\end{figure}

\bigskip
\begin{proof}[Solution 2: Mean Value Theorem]
 Suppose $ f(x_0) = 0$. We prove $ f(x) = 0$
on $ I = [x_0 - \frac{1}{2\lambda}, x_0 +\frac{1}{2\lambda}]$. Cleary then we can pick the endpoints of this interval to extend our interval where $f(x)$ is 0. Continuing this process  thus this implies that $f(x)$ is 0 everywhere. 

 \noindent Let $ x\in I$. By the Mean Value Theorem,
$$ |f(x) - f(x_0)| \leq |f'(y_1)| |x - x_0| \leq |f'(y_1)|
\frac{1}{2\lambda},$$ 
for some $ y_1\in I$. By (\ref{lam}),
we have $$ |f(x)|=|f(x)-0| \leq |f'(y_1)|
\frac{1}{2\lambda} \le \frac{1}{2}f(y_1).$$ Similarly,  $ |f(y_1)| \leq \frac{1}{2}f(y_2)$ for
some $ y_2 \in I$. Thus $ |f(x)| \leq\frac{1}{2^{2}}f(y_2)$.  
Continuing this process, we obtain 
\begin{equation}\label{seq}
 |f(x)| \leq \frac{1}{2^{n}}f(y_n)               
\end{equation}
for some $ y_n \in I, n\in\mathbb{N}$. 

From the Extreme  Value Theorem, $ f(x)$ continuous $\Rightarrow  f(x) < M \text{ for all } x\in I$. Using this 
with (\ref{seq}), we get
$$ |f(x)| \leq \frac{M}{2^{n}} \text{ for all } n\in\mathbb{N}.$$
Sending $n\rightarrow \infty$ gives $f(x)\equiv0$.

 \end{proof}

  \begin{proof}[Solution 3: A Barehanded ODE Approach]    
We shall construct ``fences'' similar to the first solution above. If $f(x)>0 \ \forall x\in [x_0,x_1]\subset E\subset \R$, then $$ \left|f'(x)\right| \leq \lambda |f(x)| \  \Leftrightarrow \
 -\lambda \leq \frac{f'(x)}{f(x)} \leq \lambda. $$

  \noindent Integrating both sides of this over the interval $[x_0,x_1]$ gives 
 $$-\lambda (x_1 - x_0) \leq \ln\left(\frac{f(x_1)}{f(x_0)}\right) \leq
\lambda (x_1 - x_0).$$

 \noindent But this is equivalent to
\begin{equation}\label{e}
 e^{-\lambda (x_1 - x_0)} \leq \frac{f(x_1)}{f(x_0)} \leq e^{\lambda
(x_1 - x_0)}.
\end{equation}         

 Now, let $ f(x^*) > 0$. Define  $$ l = \inf \{ w | f(x) > 0 \text{
for all }w<x<x^* \}$$ and $$ u = \sup \{ w | f(x) > 0
\text{ for all } x^* < x < w\}.$$ We see that $ l \neq -\infty\Rightarrow f(l)  = 0$ and $ u \neq \infty\Rightarrow f(u)  = 0.$

Using  (\ref{e}), assume $ l \neq -\infty$. Without loss of generality, let $x_1 = x^*$ and  pick a sequence $ \{x_0\}
\downarrow l$. In the limit we find that $\infty\le C$, where $C$ is finite; hence a contradiction.

Or, we may assume  $ u \neq \infty.$ Let $x_0 = x^*$ and pick a sequence $ \{x_1\} \uparrow u$ to obtain a similar contradiction of $C\le0$, where $C$ is positive. 

In either case, we find that there is no $x^*$ such that $ f(x^*) > 0$. Recalling that if $f(x^*)<0$, then $-f(x)$ satisfies (\ref{lam}) and is positive at $x^*$, we find that $f(x)=0$ everywhere. 

\end{proof}

\noindent One interpretation of what we have shown so far is if
\begin{enumerate}
\item
$ f$ is differentiable,

\item$ f(x_0)=0$, and

\item for some $ \delta > 0$, we have that $ f(x) \neq 0$ when
$ x \neq x_0$ and $ x\in [x_0 - \delta, x_0 + \delta]$,
\end{enumerate}
then letting $A_{f}(x) \equiv
\left|\frac{f'(x)}{f(x)}\right|$,

$$ \limsup_{x\rightarrow x_0}A_{f}(x)=\infty.$$ Hence we see quite clearly that the ratio $\left|\frac{f'(x)}{f(x)}\right|$ detects roots. We shall rely on this property in our singular integral boundary measure introduced below.             





\section{Geometric Measure Theory Background}

In this section, we recall some important definitions and results for later use. In the next section, we present the first main result regarding the $\Hd^{n-1}$-measure of the 0-level sets of $C^{1,1}$ functions and compute the singular integral (\ref{integral}) for the simple example of a paraboloid whose 0-level set is the unit circle. 

\begin{defn}\label{haus}
\textbf{Hausdorff Measure.} With this outer (radon) measure, we can
  measure $k$-dimensional subsets of $\R^n$ $(k\leq n)$. While it is
  true that $\mathcal{L}^n=\Hd^n$ for $n\in
  \mathbb{N}$ (see Section 2.2 of \cite{evans-2015-measure}), Hausdorff measure $\Hd^k$ is also defined for
  $k\in[0,\infty)$ so that even sets as wild as fractals are
  \textit{measurable} in a meaningful way (see Figure \ref{hausd}). Note that $\Hd^0$ is the counting measure.\\

To compute the $k$-dimensional Hausdorff measure of $A\subset\R^n$:
\begin{enumerate}
\item Cover $A$ with a collection of sets $\mathcal{E}=\{E_i\}_{i=1}^\infty$, where diam$(E_i)\leq d \ \forall i.$ 
\item Compute the $k$-dimensional measure of that cover: $$\mathcal{V}_{\mathcal{E}}^k(A)=\sum_i\alpha(k)\left(\frac{\text{diam}(E_i)}{2}\right)^k,$$ where $\alpha(k)$ is the $k$-volume of the unit $k$-ball. 
\item Define $\Hd_d^k(A)=\inf_{\mathcal{E}}\mathcal{V}_{\mathcal{E}}^k(A)$, where the infimum is taken over all covers whose elements 
have maximal diameter $d$. 
\item Finally, we define $\Hd^k(A)=\lim_{d\downarrow0}\Hd_d^k(A).$
\end{enumerate}
\end{defn}

\begin{figure}[H]\label{hausd}
  \centering
  \scalebox{1}{\input{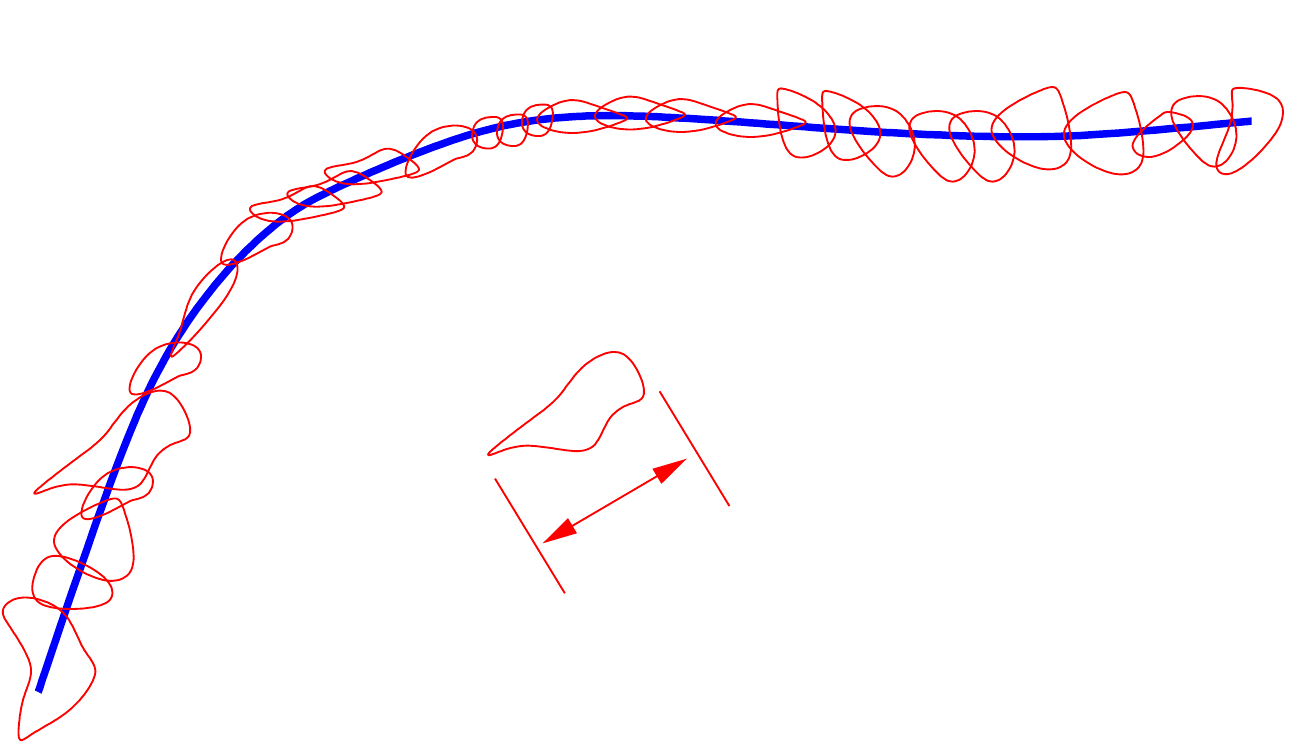_t}}
  \caption[Hausdorff measure.]{The Hausdorff measure is derived from a cover of arbitrary sets.}
\end{figure}

Recall that  a function $f : \R^n\rightarrow \R^m$ is \textbf{Lipschitz continuous} if $\ \exists \alpha>0$ such that $|f(x)-f(y)|\leq \alpha |x-y|$ for all $x,y\in\R^n$, and we set $L\equiv \inf_\beta\{\alpha_\beta\}$ to be the Lipschitz constant of $f$. Also, recall that Lipschitz functions are differentiable almost everywhere by Rademacher's Theorem \cite{evans-2015-measure}. 

Thus, while a $C^1$ function is one which is continuously differentiable, a $C^{1,1}$ function is one whose derivative is Lipschitz continuous. By a ``$C^{1,1}$ set $E$,'' we shall mean that for all $x\in \partial E$, there is a neighborhood of $x$, $U_x\subset
\R^n$, such that after a suitable change of coordinates, there is a
$C^{1,1}$ function $f:\R^{n-1}\rightarrow\R$ such that $\partial E\cap
U_x$ is the graph of $f$.

\begin{defn}\label{reach}
 The  \textbf{reach} of $E$, $\reach(E)$, is defined
\[\reach(E)\equiv \sup \{r \; |   \text{ every ball of radius $r$ touching $E$ touches at a single point}\}.\]
 \end{defn}

\begin{rem}
If a set $E$ has \textit{positive reach}, then there exist $\epsilon$-neighborhoods $U_\epsilon$ of $E$ for all $\epsilon< \reach(E) $ such that for every $y\in U_\epsilon$ not in $E$, there exists a unique closest point $x\in E $, which is to say the normals $\vec{n}(x)$ to $E$ don't intersect in any $U_\epsilon$, where $\epsilon< \reach(E) $. 
\end{rem} 

 \begin{rem}
 If $\partial E$ is $C^{1,1}$, then $E$ has positive
reach (see Remark 4.20 in \cite{federer1959curvature}).
\end{rem} 

 \begin{thm}\label{imp}\cite{la2001characterization}\cite{luc1995taylor}
\textbf{($C^{k,1}$ Implicit Function Theorem)} Let $f:\R^n\times \R^m \rightarrow \R^m$ be a $C^{k,1}$ function, $k\geq 1$ with the property that $f(x_0,y_0)=0$ and suppose that $\nabla_yf(x_0,y_0)\neq 0.$ Then there exists an open neighborhood $U$ of $x_0$ and an open neigborhood $V$ of $y_0$ such that for any $x\in U$ there exists a unique $g(x)\in V$ with the property that $f(x,g(x))=0.$ The function $g:U\rightarrow V$ satisfies $g(x_0)=y_0$ and $g\in C^{k,1}$.
\end{thm}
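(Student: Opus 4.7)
The plan is to reduce to the classical smooth implicit function theorem and then bootstrap, using the explicit formula for $Dg$, to the Lipschitz statement about the top derivative. Since $C^{k,1} \subset C^k$ and $k \geq 1$, the classical implicit function theorem applies: interpreting $\nabla_y f(x_0,y_0) \neq 0$ as saying the $m\times m$ matrix $\nabla_y f(x_0,y_0)$ is invertible, there are open neighborhoods $U$ of $x_0$ and $V$ of $y_0$ and a unique $C^k$ map $g: U \to V$ with $g(x_0) = y_0$ and $f(x,g(x)) = 0$ on $U$. Shrinking $U$ and $V$ if necessary, we may also assume $\nabla_y f(x,y)$ remains invertible on $U \times V$ with inverse uniformly bounded.

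Implicit differentiation of $f(x,g(x)) \equiv 0$ yields
$$Dg(x) = -\bigl[\nabla_y f(x,g(x))\bigr]^{-1}\,\nabla_x f(x,g(x)),$$
and this single formula is the lever that will give the Lipschitz regularity of $Dg$. The key observation is that the partial derivatives $\nabla_x f$ and $\nabla_y f$ of a $C^{k,1}$ function are themselves $C^{k-1,1}$ functions of $(x,y)$, hence Lipschitz on compact subsets of $U\times V$. Also, on the open set of invertible matrices, the inversion map $A \mapsto A^{-1}$ is smooth, so its restriction to any compact set where the operator norm of $A^{-1}$ is controlled is Lipschitz.

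For the base case $k=1$, since $g$ is $C^1$ on $U$ it is locally Lipschitz, so $x \mapsto (x,g(x))$ is locally Lipschitz from $U$ into $U\times V$. Composing with the Lipschitz maps $\nabla_y f$, matrix inversion, and $\nabla_x f$, and taking the matrix product of the two locally Lipschitz factors, shows the right-hand side of the displayed formula is locally Lipschitz in $x$. Therefore $Dg$ is locally Lipschitz, i.e., $g \in C^{1,1}$. For $k > 1$, we induct: assuming the result for $k-1$, the formula expresses $Dg$ as a combination of $C^{k-1,1}$ functions of $(x,g(x))$, composed with the $C^k$ map $x \mapsto (x,g(x))$; the usual chain-rule bookkeeping (product, inversion, and composition of $C^{k-1,1}$ functions are again $C^{k-1,1}$) then gives $Dg \in C^{k-1,1}$, hence $g \in C^{k,1}$.

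The main obstacle is the careful verification, at each order, that $C^{k-1,1}$ regularity is preserved under the three operations appearing in the implicit formula: composition with a $C^{k,1}$ map, matrix inversion where the inverse stays bounded, and matrix multiplication. Once these closure properties are in hand (each proved by induction on $k$ with uniform control of the Lipschitz constants on compact neighborhoods), the inductive step above is essentially automatic, and the theorem follows. No new analytic ideas beyond the classical implicit function theorem and Rademacher-type bookkeeping of Lipschitz constants are required.
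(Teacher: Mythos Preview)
The paper does not prove this theorem; it is stated with citations to \cite{la2001characterization} and \cite{luc1995taylor} and used as a black box. There is therefore no ``paper's own proof'' to compare against.

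That said, your sketch is the standard and correct route: apply the classical $C^k$ implicit function theorem, then use the explicit formula $Dg = -[\nabla_y f]^{-1}\nabla_x f$ evaluated along the graph of $g$ to bootstrap the Lipschitz regularity of the top derivative. The closure properties you flag (composition with a sufficiently regular map, matrix inversion on sets where the inverse is bounded, matrix multiplication) are exactly what need to be checked, and they do hold; the only caveat is that all Lipschitz statements are local, so at each stage you should be working on a fixed compact neighborhood of $(x_0,y_0)$ rather than on all of $U\times V$. With that understood, the induction goes through as you describe.
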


The next theorem is a generalization of the \textit{change of variables formula} from calculus, the first part of which  says if $f$ behaves nicely enough, that is, $f$ is Lipschitz, then we can  calculate the $\Hd^n$-measure of $f(A)$ under suitable conditions.

\begin{thm}\label{area1}\cite{evans-2015-measure}
\textbf{(Area Formula I)} Let $f:\R^n \rightarrow \R^m$ be Lipschitz continuous, $n\leq m$, and $dx\equiv d\mathcal{L}^nx.$ Then for each $\mathcal{L}^n$-measurable subset $A\subset \R^n$,
\[
\int_A Jfdx=\int_{\R^m}\Hd^0(A\cap f^{-1}\{y\})d\Hd^ny.
\]
Moreover, if $\Omega$ is an $\mathcal{L}^n$-measurable subset of $ \R^n$, then for each $\mathcal{L}^n\myell \Omega$-integrable function $g(x)$,
\begin{equation*}
\int_{\Omega} g(x) Jf\,dx=\int_{f(\Omega)} \sum_{x\in f^{-1}\{y\}}g(x)d\Hd^ny.
\end{equation*}
\end{thm}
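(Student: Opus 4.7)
The plan is to follow the classical Federer-style proof of the Area Formula, as developed in Evans and Gariepy \cite{evans-2015-measure}. I would first establish the unweighted identity
\begin{equation*}
\int_A Jf \, dx = \int_{\R^m} \Hd^0(A \cap f^{-1}\{y\}) \, d\Hd^n y,
\end{equation*}
and then deduce the weighted version by the standard ladder: run the unweighted identity against indicator functions of measurable sets, extend by linearity to simple functions, and pass to a general $\mathcal{L}^n\myell \Omega$-integrable $g$ by monotone convergence and decomposition into positive and negative parts.

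For the unweighted identity, the first reduction is from merely Lipschitz $f$ to $C^1$ $f$. By Rademacher's Theorem $f$ is differentiable $\mathcal{L}^n$-almost everywhere, and the Whitney Extension Theorem provides, for each $\epsilon > 0$, a $C^1$ map agreeing with $f$ outside a set of Lebesgue measure less than $\epsilon$. Since Lipschitz maps do not increase Hausdorff measure the right-hand side is controlled by excising small sets, and since $Jf$ is dominated by a power of the Lipschitz constant so is the left-hand side; therefore it suffices to treat $f \in C^1$. I would then split $A = A_0 \sqcup A_+$ along the critical locus $\{Jf = 0\}$. On $A_0$ the left-hand integrand vanishes identically, and the right-hand side vanishes once I show $\Hd^n(f(A_0)) = 0$: the standard argument covers $A_0$ by small cubes on which $Df$ has rank less than $n$, confines $f$ to thin tubular neighborhoods of affine subspaces of dimension at most $n-1$, and estimates the resulting $n$-volumes.

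The heart of the proof is the case $A_+$. I would fix a countable dense family of injective linear maps $L : \R^n \to \R^m$ and a sequence $t_i \downarrow 1$, then partition $A_+$ into measurable pieces $E_k$ on each of which
\begin{equation*}
t^{-1}\,|L(x-y)| \leq |f(x)-f(y)| \leq t\,|L(x-y)|
\end{equation*}
holds and $Jf$ lies within a factor of $t$ of $JL = \sqrt{\det(L^T L)}$. On each $E_k$ the identity reduces to the linear case, where $\int_{E_k} JL \, dx = JL \cdot \mathcal{L}^n(E_k) = \Hd^n(L(E_k))$ is just the statement that $L$ scales $n$-volume by $JL$; the bi-Lipschitz sandwich transfers this to $f$, and injectivity of $f$ on $E_k$ converts $\Hd^n(f(E_k))$ into the multiplicity integral on the right. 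The main obstacle I expect is bookkeeping the multiplicity across overlapping pieces --- resolved by refining the $E_k$ to a disjoint measurable partition and summing --- together with controlling the accumulated distortion as $t \downarrow 1$, which I would handle by a $\limsup$--$\liminf$ sandwich. Once the unweighted identity is in place, the weighted form follows from the ladder described above.
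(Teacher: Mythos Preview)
The paper does not supply its own proof of this theorem: it is stated as background and attributed to \cite{evans-2015-measure}, so there is nothing in the paper to compare your argument against line by line. Your sketch is precisely the classical Federer--Evans--Gariepy route (Rademacher plus Whitney to reduce to $C^1$, the critical/noncritical split, the linearization partition with bi-Lipschitz sandwiching, and the simple-function ladder for the weighted version), which is exactly the proof the citation points to; in that sense your approach coincides with what the paper defers to.
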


Here $Jf$ is the Jacobian of $f$, the $n$-volume expansion/contraction factor associated with the linear approximation $Df$ at each point in the domain of $f$, and since $f$ is Lipschitz, it is differentiable $\Hd^n$-a.e. so that $Jf$ exists $\Hd^n$-a.e.  The multiplicity function $\Hd^0(A\cap f^{-1}\{y\})$ takes into account the case in which $f$ is not 1-1 (see Figure \ref{areapic}).
 
\begin{figure} [H]
\begin{center}
    \scalebox{0.8}{\input{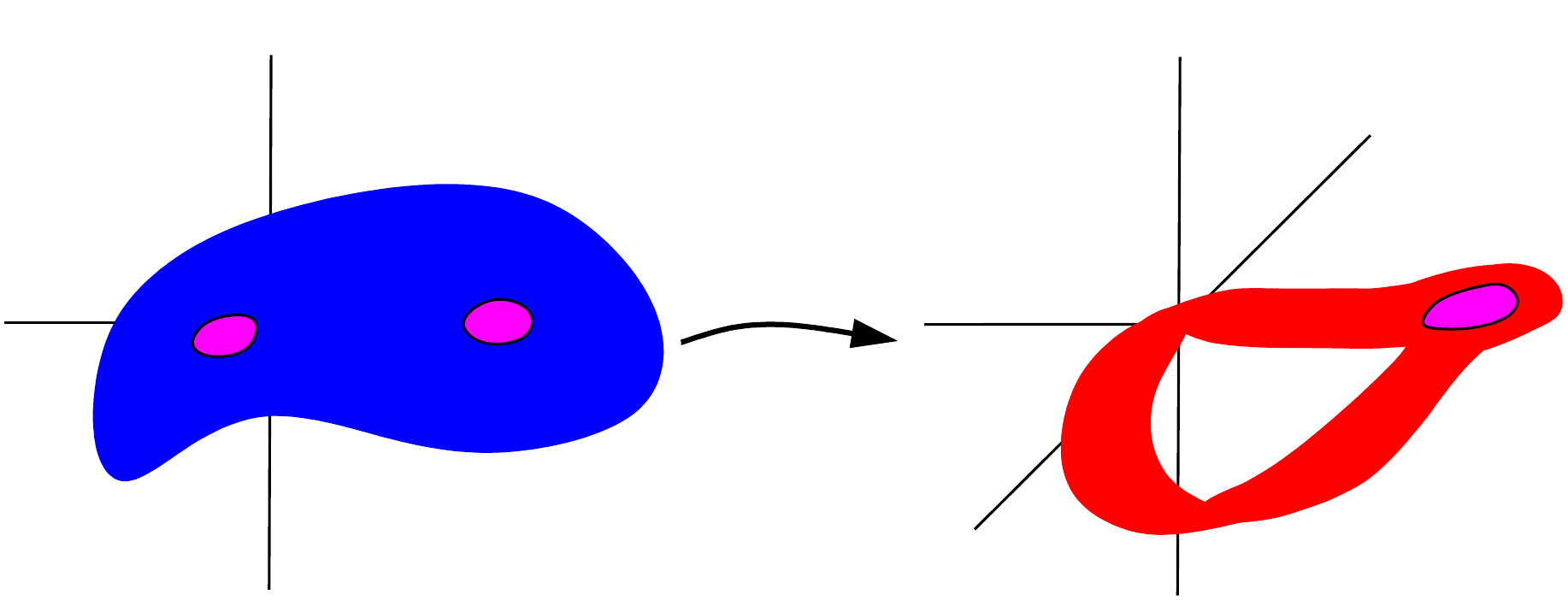_t}}
      \caption[The Area Formula.]{Mapping $f$ such that the magenta areas correspond to multiplicity 2 regions in the image $f(A)$. Note that both cases show the domain of the set and not the graph space.} 
      \label{areapic}
 \end{center}
\end{figure}

Since Lipschitz maps cannot increase the Hausdorff dimension of $A$, we can disregard the case where $f$ is a space-filling curve. Clearly though, the $\Hd^n$-measure of $f(A)$ would be infinite, as a plane has infinite length, for instance. On the other hand, if $Df$ is singular, then $Jf$ is identically 0, so for example, in the case of a 2-d set being compressed into a 1-d set, we have information being lost, and the $\Hd^n$-measure of $f(A)$ is 0, as expected.

Applications of the Area Formula include the case of mapping a curve from $\R$ into $\R^3$ and finding its new length; computing the surface area of a graph or of a parametric hypersurface; or finding the volume of a set mapped into a submanifold. As a simple example, we present the case  of finding the length of a portion of the graph of the sine function in $\R$.

\begin{exm} Since $f=\sin(x)$ is Lipschitz, we let the collection of points $g(x)=(x,\sin(x)), x\in [0,2\pi]$ define the graph of $f$ over $[0,2\pi].$  We compute the arc length (i.e., the 1-d surface area) of $f$ on the set $[0,2\pi].$
 \[ Dg = 
 \begin{bmatrix}
1\\ 
\cos(x)
\end{bmatrix}
\]

Since $g$ is 1-1 and $Jg= \sqrt{1+\cos^2(x)}$, we have $\Hd^1(\text{Graph of f on }[0,2\pi])$= (arc length of $f$ on $[0,2\pi])=
\int_0^{2\pi}\sqrt{1+\cos^2(x)}\,dx \approx 7.64.$
\end{exm}

We next present a special case of the second part of the Area Formula stated above.

\begin{thm}\cite{morgan-2008-geometric}\label{areaII}
\textbf{(Area Formula II)} Let $f:\R^n \rightarrow \R^m$ be one-to-one and Lipschitz continuous, $n\leq m$, and $dx\equiv d\mathcal{L}^nx$. 
If $\Omega$ is an $\mathcal{L}^n$-measurable subset of $ \R^n$, then for each $\mathcal{L}^n\myell \Omega$-integrable function $g(x)$,
\begin{equation}\label{Area}
\int_{\Omega} g(x) Jf\,dx=\int_{f(\Omega)} g(f^{-1}(y))d\Hd^ny.
\end{equation}
\end{thm}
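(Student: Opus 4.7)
The plan is to derive Area Formula II directly from the second part of Theorem \ref{area1} (Area Formula I), with the injectivity of $f$ collapsing the multiplicity-weighted sum to a single term. Since Area Formula I is already assumed, no new measure-theoretic machinery is needed; the task is almost entirely bookkeeping.

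First I would apply the second statement of Theorem \ref{area1} to the same $g$ and $\Omega$, which gives
\begin{equation*}
\int_{\Omega} g(x)\,Jf\,dx = \int_{f(\Omega)} \sum_{x\in f^{-1}\{y\}} g(x)\, d\Hd^n y.
\end{equation*}
Then I would use the hypothesis that $f$ is one-to-one: for each $y\in f(\Omega)$, the fiber $f^{-1}\{y\}$ contains exactly one point, and that point is by definition $f^{-1}(y)$. Hence the inner sum reduces to the single term $g(f^{-1}(y))$, and substituting this back produces the claimed identity (\ref{Area}). For $y\notin f(\Omega)$ the fiber meets $\Omega$ in the empty set, but such $y$ are already excluded from the outer integral, so no separate accounting is required.

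I do not anticipate any substantive obstacle; the only point worth articulating is the $\Hd^n\myell f(\Omega)$-measurability of $y\mapsto g(f^{-1}(y))$, which is encoded in Theorem \ref{area1} itself. That theorem already asserts that the multiplicity-weighted sum is an integrable function of $y$, and under injectivity this sum is literally the composition $g\circ f^{-1}$ restricted to $f(\Omega)$. A secondary sanity check is that $f(\Omega)$ is $\Hd^n$-measurable (Lipschitz images of measurable sets cannot raise Hausdorff dimension), so the right-hand side is well-posed. No further subtleties arise, and the argument is short enough to be written out in a few lines after stating the reduction.
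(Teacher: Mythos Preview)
Your proposal is correct and matches the paper's own framing: the paper does not give a proof of Theorem~\ref{areaII} but simply cites \cite{morgan-2008-geometric} and describes it as ``a special case of the second part of the Area Formula stated above,'' which is exactly the reduction you carry out. Collapsing the fiber sum in Theorem~\ref{area1} to a single term via injectivity is the intended derivation, and your remarks on measurability are appropriate bookkeeping.
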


\begin{defn}\label{reg}
Let $f: \R^n\rightarrow \R^m$ be a differentiable function with $m\leq n$. A \textbf{regular value} of $f$ is a value $c\in\R^m$ such that the differential $Df$ is surjective at every preimage of $c$. This implies that $Df$ is full rank on the set $f^{-1}\{c\}$. 
\end{defn}

We are interested in here in the case $m=1$, which if $c$ is a regular value implies $||Df||\neq 0$ on $f^{-1}\{c\}$. In fact, if we let $f\in C^1$ and $f^{-1}\{c\}\subset B(0,R)$ (closed), we claim that there exists a neighborhood $E$ of $f^{-1}\{c\}$ on which $||Df||> 0$ for all $y\in E.$ Indeed, since $Df\in C^0$  we have $0<\epsilon<||D_xf||<M<\infty$ for all  $x\in f^{-1}\{c\}$. 

Let $W_{\frac{\epsilon}{2}}\equiv \{y \ \vert \ ||D_yf||<\frac{\epsilon}{2}\}.$ Now, suppose there exists a distinct sequence $\{y_i\}\subset W_{\frac{\epsilon}{2}}$ such that $d(y_i,f^{-1}\{c\})<\frac{1}{2^i}$, $i=1,2,\dots.$
Since $y_i\in B(0,2R)$, by the Bolzano-Weierstrass Theorem, there exists a subsequence $\{y_{i_k}\}$ and a point $y^*$ such that $y_{i_k}\rightarrow y^*$ as $k\rightarrow \infty$. Since $d(y^*,f^{-1}\{c\})=0$, we have $\{y_{i_k}\} \rightarrow f^{-1}\{c\}.$

But $||Df||$ is continuous, so this implies that $||D_{y^*}f||<\frac{\epsilon}{2}$, which is a contradiction. Thus there exists a $\delta>0$ such that $d(f^{-1}\{c\},W_{\frac{\epsilon}{2}})>\delta$, which implies $$\cup_{x\in f^{-1}\{c\}}(B(x,\delta)\cap W_{\frac{\epsilon}{2}})=\emptyset.$$ The result follows by taking $\cup_{x\in f^{-1}\{c\}}B(x,\delta)$ to be the neighborhood where $||Df||\neq0$.

\begin{rem}
The general case of showing $Df$ is full rank in a neighborhood of $f^{-1}\{c\}$ for $f$ as in Definition \ref{reg} can be shown using the continuity of the determinant of a non-zero square submatrix in $D_cf$ of maximal dimension, which exists because $D_cf$ is full rank.
\end{rem} 


\begin{thm}\label{regval}\cite{janich2013vector}
\textbf{Regular Value Theorem:} If $c\in M$ is a regular value of a differentiable map $f:N\rightarrow M$  (where $N$ and $M$ are $n$- and $m$- dimensional manifolds, respectively), then its preimage $f^{-1}\{c\}\subset N$ is a submanifold whose codimension is equal to the dimension of $M.$
\end{thm}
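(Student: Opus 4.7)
The plan is to prove this locally using the Implicit Function Theorem, leveraging the surjectivity of $Df$ guaranteed by the regular value hypothesis. Being a submanifold is a local property, so it suffices to exhibit, for every $p\in f^{-1}\{c\}$, a neighborhood $U\subset N$ of $p$ in which $f^{-1}\{c\}\cap U$ is a submanifold of dimension $n-m$.

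First I would pass to Euclidean coordinates by choosing charts $(U,\varphi)$ around $p$ with $\varphi(p)=0$ and $(V,\psi)$ around $c$ with $\psi(c)=0$, so that $\tilde f\equiv \psi\circ f\circ \varphi^{-1}:\tilde U\subset \R^n\to\R^m$ is differentiable with $\tilde f(0)=0$. Since diffeomorphisms preserve rank, $D\tilde f(0)$ is still surjective, hence of rank $m$. Consequently some $m\times m$ minor of $D\tilde f(0)$ is nonzero, so after permuting the coordinates of $\R^n=\R^{n-m}\times\R^m$ and writing points as $(x,y)$, I may assume the partial Jacobian $\nabla_y \tilde f(0,0)$ is invertible.

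At that point the Implicit Function Theorem (in the form stated as Theorem \ref{imp}, or its standard $C^1$ analogue) applies directly: it yields open neighborhoods $U'\subset\R^{n-m}$ of $0$ and $V'\subset\R^m$ of $0$ together with a differentiable function $g:U'\to V'$ such that inside $U'\times V'$ the level set $\tilde f^{-1}\{0\}$ coincides exactly with the graph $\{(x,g(x)):x\in U'\}$. Pulling this graph back through $\varphi^{-1}$ produces a submanifold chart for $f^{-1}\{c\}$ near $p$, parameterized by the $(n-m)$-dimensional domain $U'$. Covering $f^{-1}\{c\}$ by such charts shows it is a submanifold of $N$ with dimension $n-m$, hence of codimension $m=\dim M$.

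There is no serious obstacle: the work is essentially bookkeeping once the Implicit Function Theorem is invoked. The only delicate point is verifying that after a coordinate permutation the relevant $m\times m$ block of $D\tilde f(0)$ is invertible, but this follows immediately from the surjectivity (equivalently, full rank) of $Df$ at regular values. The substantive analytic content is fully absorbed into Theorem \ref{imp}.
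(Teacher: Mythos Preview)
Your argument is correct and is precisely the standard route to the Regular Value Theorem: reduce to local Euclidean coordinates, use surjectivity of $Df$ to extract an invertible $m\times m$ block, and invoke the Implicit Function Theorem to write the level set locally as a graph over $\R^{n-m}$. There is nothing to fault here.

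However, there is no ``paper's own proof'' to compare against. Theorem~\ref{regval} is stated with a citation to J\"anich's \emph{Vector Analysis} and is not proved in the paper; it is simply quoted as background and then applied in the proof of Theorem~\ref{first} to conclude that the zero level set $A$ is an $(n-1)$-dimensional submanifold. Your write-up is essentially the proof one finds in that reference (and in most differential topology texts), so in that sense you have reconstructed what the citation points to rather than diverged from it.
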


We note that $f^{-1}\{c\}$ is also a closed set since $f\in C(\R^n)$ implies that the inverse image of a closed set is closed.



\section{Measuring Level Sets of $C^{1,1}$ Functions}
The following theorem establishes a singular integral for measuring the $\mathcal{H}^{n-1}$ ``length'' of level sets of 
$C^{1,1}$ functions under suitable conditions.

\begin{thm}\label{first}
Let $f:\R^n\rightarrow \R^1$, and let $B(0,R)$ represent the closed ball of radius $R$ centered at the origin. If $0\in f(\R^n)$ is a regular value of $f\in C^{1,1}$ and $ f^{-1}\{0\} \subset B(0,R)$, then \
\begin{equation}\label{integral}
\lim_{k\rightarrow \infty}\left\{ \frac{1}{k}\int_{B(0,2R)}\left(\frac{||Df||}{|f|}\right)^{\frac{k-1}{k}}dx\right\}=2\mathcal{H}^{n-1}(\{x \ \vert \ f(x)=0\} ).
\end{equation}
\end{thm}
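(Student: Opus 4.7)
The plan is to localize the integral to a tubular neighborhood of the zero set $M := f^{-1}\{0\}$, parametrize that neighborhood via normal coordinates, apply the Area Formula (Theorem \ref{areaII}), and extract the asymptotic behavior arising from the normal direction. By the Regular Value Theorem (Theorem \ref{regval}) combined with the $C^{k,1}$ Implicit Function Theorem (Theorem \ref{imp}), $M$ is a compact $(n-1)$-dimensional $C^{1,1}$ submanifold of $B(0,R)$, and being $C^{1,1}$ it has positive reach. I would fix $\delta>0$ smaller than both $\reach(M)$ and $R$, so that the tubular neighborhood $U_\delta := \{x : d(x,M) < \delta\}$ lies in $B(0,2R)$ and the map $\Phi(p,s) = p + s\vec{n}(p)$ is a bi-Lipschitz diffeomorphism from $M \times (-\delta,\delta)$ onto $U_\delta$, where $\vec{n}(p)$ is the unit normal. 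By the neighborhood argument preceding Theorem \ref{regval}, I may shrink $\delta$ further so that $||Df|| \geq \epsilon_0 > 0$ on $U_\delta$, while compactness of $M$ forces $|f| \geq c > 0$ on $B(0,2R)\setminus U_\delta$.

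On $B(0,2R)\setminus U_\delta$ the integrand $(||Df||/|f|)^{(k-1)/k}$ is uniformly bounded in $k$ (since $(k-1)/k \leq 1$), so this piece contributes $O(1/k) \to 0$. On $U_\delta$, I would apply the Area Formula (Theorem \ref{areaII}, used on charts covering $M$) with the change of variables $x = \Phi(p,s)$, whose Jacobian is $J\Phi(p,s) = \prod_{i=1}^{n-1}(1 - s\kappa_i(p)) = 1 + O(s)$ uniformly in $p$, producing an iterated integral over $M \times (-\delta,\delta)$.

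The crucial estimate comes from $C^{1,1}$ regularity along each normal ray. Since $f(\Phi(p,0)) = 0$ and $\partial_s f(\Phi(p,s))\big|_{s=0} = \nabla f(p)\cdot\vec{n}(p) = \pm||Df(p)||$, and since $\partial_s f(\Phi(p,\cdot))$ is Lipschitz in $s$ uniformly in $p$, a Taylor expansion with Lipschitz remainder yields
\begin{equation*}
|f(\Phi(p,s))| = |s|\cdot ||Df(p)|| \cdot(1 + O(s)), \qquad ||Df(\Phi(p,s))|| = ||Df(p)||\cdot(1 + O(s)),
\end{equation*}
uniformly in $p \in M$. The factor $||Df(p)||$ cancels in the ratio, so $||Df||/|f| = |s|^{-1}(1+O(s))$, and the integrand reduces to $|s|^{-(k-1)/k}(1+O(s))^{(k-1)/k}J\Phi(p,s)$.

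Integrating in $s$ first, the leading term is $\int_{-\delta}^{\delta}|s|^{-(k-1)/k}\,ds = 2k\delta^{1/k}$, so multiplying by $1/k$ produces $2\delta^{1/k} \to 2$, while the $M$-integral of the constant $1$ contributes $\mathcal{H}^{n-1}(M)$, giving the claimed limit. The principal obstacle is to control the $O(s)$ remainder terms uniformly over $M$: since $(k-1)/k < 1$, one gets $(1+O(s))^{(k-1)/k} = 1 + O(s)$, and this correction integrates against $|s|^{-(k-1)/k}$ to give $O(\delta^{1+1/k})$, which vanishes after dividing by $k$ uniformly in $p$. The factor $2$ in the result precisely reflects the two sides of $M$ (positive and negative normal distance), while $\mathcal{H}^{n-1}(M)$ emerges from the tangential integration of $1$.
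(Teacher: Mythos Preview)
Your proposal is correct and follows essentially the same route as the paper: localize to a tubular neighborhood via positive reach, parametrize by the normal map $\Phi(p,s)=p+s\,\vec{n}(p)$, apply the Area Formula, use the Lipschitz bound on $Df$ to reduce the integrand along each normal ray to $|s|^{-(k-1)/k}(1+O(s))$, and evaluate $\frac{1}{k}\int_{-\delta}^{\delta}|s|^{-(k-1)/k}\,ds = 2\delta^{1/k}\to 2$. The only cosmetic difference is that the paper writes explicit two-sided sandwich bounds (e.g., $\frac{\|D_zf\|-L\epsilon}{(\|D_zf\|+L\epsilon)|t|}\le \frac{\|Df\|}{|f|}\le \frac{\|D_zf\|+L\epsilon}{(\|D_zf\|-L\epsilon)|t|}$ and $(1-\epsilon\hat\kappa)^{n-1}\le JH\le(1+\epsilon\hat\kappa)^{n-1}$) and then sends $\epsilon\to 0$ after $k\to\infty$, whereas you keep $\delta$ fixed and package the corrections as $O(s)$ remainders that vanish after dividing by $k$.
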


\begin{proof}

First, note that $||\cdot||$ is the operator norm, which in this case corresponds precisely to the Euclidean norm; and $|\cdot|$ is the Euclidean norm. Let $A\equiv \{x \ \vert \ f(x)=0\}$, the 0-level set of $f$, be such that $A\neq \emptyset$. Observe that Theorem \ref{regval} implies that $A$ is an $(n-1)$-dimensional submanifold, which implies that $\Hd^{n-1}(A)>0$. (In fact, $A$ is an embedded submanifold without boundary.) Since $A$ is closed, we have that $A$ is compact (as a subset of $\R^n$). Further, by Theorem \ref{imp}, $A$ is (locally) the graph of a $C^{1,1}$ function and thus has positive reach, which intuitively says that the normals to $A$ do not intersect in an $\epsilon$ neighborhood of $A$ for all $\epsilon<\reach(A)$. Now, recall the Tube Formula \cite{vixie-geo-deriv-tubes} for the $\Hd^n$-dimensional volume of the $\epsilon$-``tube'' $T_\epsilon$ surrounding $A$:
 \begin{equation}\label{tube}
\Hd^n(T_\epsilon)=2\epsilon \Hd^{n-1}(A)+o(\epsilon).
\end{equation}

\noindent Thus $T_\epsilon\subset B(0,R+\epsilon) \Rightarrow \Hd^n(T_\epsilon)<\infty,$ which together with (\ref{tube}) implies that $\Hd^{n-1}(A)<\infty$ as well.

Let $\epsilon>0$ and $L$ be the non-negative Lipschitz constant of $Df$ (see (\ref{lip}) below) such that $L\epsilon\ll ||D_zf||$ for all $z\in A$ (which we can do since $||Df|| >0$ on $A$). Further, let $||Df|| >0$ in the $\epsilon$-neighborhood $E$ of $A$, and suppose the normals to $A$ do not intersect in $E$ (due to the positive reach of $A$). Next observe that we only need to concern ourselves with integrating (\ref{integral}) over $E$ (where $||Df||\neq 0$!) since the $\frac{1}{k}$ in the limit sends everything outside of that neighborhood to 0. This follows from the fact that the integrand is continuous on the compact set $B(0,2R)\setminus E$.\\
\indent The idea that follows is that we want to integrate over $E$ so as to integrate first along the normals to $A$ and then integrate along $A$, yet due to the expansion (contraction) that occurs in dimensions $n\geq2$ as we push our 0-level set $A$ outward (inward) to form the neighborhood
$E$, which is captured in the Tube Formula (\ref{tube}), we cannot simply apply Fubini's Theorem to (\ref{integral}). Instead, we use the second version (\ref{Area}) of the Area Formula to account for this expansion in the following way.

 Let $C_\epsilon$ (which we will think of as  an $n$-dimensional manifold) be the ``cylinder''  formed from ``lifting'' $A$ up by $\epsilon$ in the $\R$ direction of the product space $\R^{n}\times \R$. We wish to apply Fubini's Theorem over $C_{\epsilon}$ (which we cannot do over $E$ due to the curvature of $A$.) So, we construct a diffeomorphism $H(w) : C_\epsilon \rightarrow E$   (quasi) explicitly using the normal map $n(z)$ of $A$, where $z\in A$. 
 
 Using $H(w)$ with the Area Formula, we then need to account for $JH$, the Jacobian of $H$, that acts as the ``expansion factor'' of the transformation $H(w)$; i.e. the amount of extra $n$-volume we get by expanding outward along the normals to $A$. But this we do by showing that as $\epsilon\rightarrow0, JH\rightarrow1.$ 
 

In our case, the Area Formula is given by
\begin{equation*}\label{ourArea}
 \int_{C_\epsilon} g(w) \;JH \;d\Hd^nw = \int_{H(C_\epsilon)} g(H^{-1}(y)) dy,
 \end{equation*}
where $w = (z,t)\in C_\epsilon \subset \R^n\times\R$, and $t\in[-\epsilon,\epsilon]$.

Now, setting
 \[g(H^{-1}(y)) =\frac{1}{k}\left(\frac{||Df(y)||}{|f(y)|}\right)^{\frac{k-1}{k}} \]
and $H(z,t) = \iota(z)+t\:n(\iota(z))$, where $\iota(z): \R^{n-1}\rightarrow \R^n$ is the inclusion map, we let $\iota(z)=z$ below, and we get that

\begin{equation*}\label{H}
 \int_{C_\epsilon} g(w) \;JH \;d\Hd^nw = \int_{C_\epsilon}
\frac{1}{k}\left(\frac{||Df(z+t\:n(z))||}{|f(z+t\:n(z))|}\right)^{\frac{k-1}{k}}\;JH
\;d\Hd^nw.
\end{equation*}

We can now use Fubini's on this, integrating, within
$C_{\epsilon}$, first in the $t$ direction and then in the $z$
direction (see Figure \ref{diffeo}).

\begin{figure} [H]
\begin{center}

      \scalebox{1.3}{\input{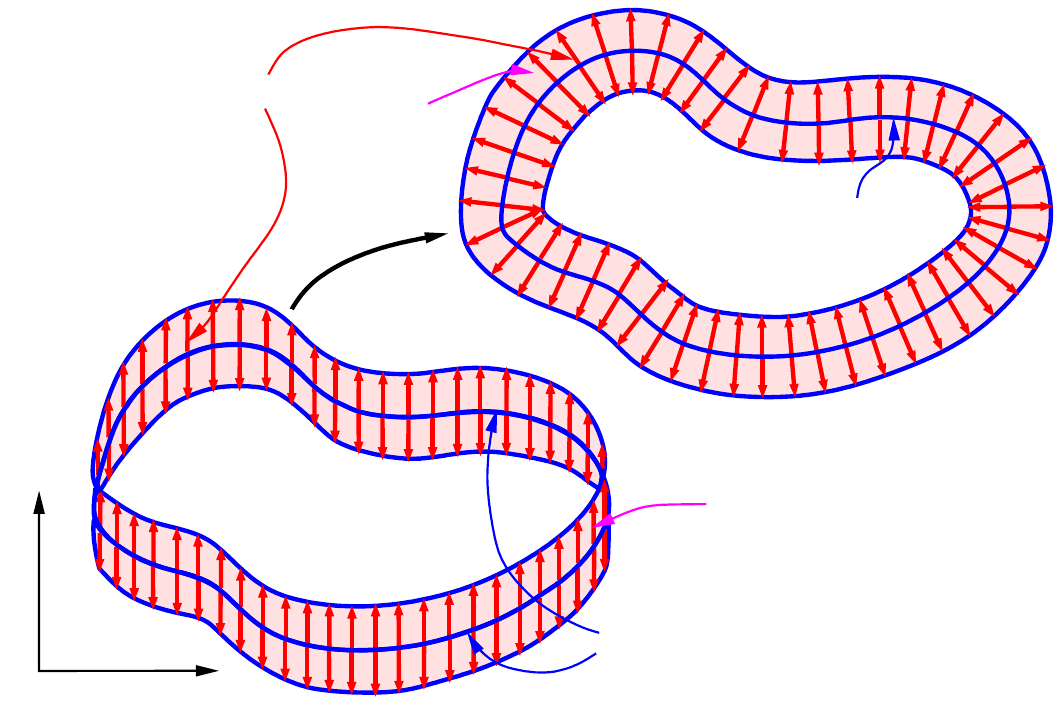_t}}
      
      \caption{A diffeomorphism $H: C_\epsilon \rightarrow E.$} 
      \label{diffeo}
 \end{center}
\end{figure}

 
 Now, with $z\in A$, $y=z+t\:n(z)\in E$, and $t\in[-\epsilon,\epsilon]$ as above, we seek a bound on $||Df(z+t\:n(z))||=||D_y||$ as well as $f(y)$ in our neighborhood $E$. Here, we exploit the Lipschitz condition of $Df$, letting $0\leq L\equiv$ the Lipschitz constant of $Df.$ We recall that $f\in C^{1,1}$ implies that its derivative is Lipschitz, so we have, for all $x,v\in \R^n$,
 
 \begin{equation*}\label{lip}
  ||D_vf-D_xf||\leq L|v-x|.
  \end{equation*}
 
 \noindent From this, we find that
 
 \begin{equation*}
\left|\: ||D_yf||-||D_zf||\:\right|\leq ||D_yf-D_zf||\leq L|y-z|\leq L\epsilon,
\end{equation*}
 
  \noindent which says
 
 \begin{equation}\label{Df}
 ||D_zf||-L\epsilon\leq ||D_yf||=||Df(z+t\:n(z))||\leq ||D_zf||+L\epsilon.
 \end{equation}
 
 \noindent  It follows that $$(||D_zf||-L\epsilon)|t|\leq |f(z+t\:n(z))|\leq (||D_zf||+L\epsilon)|t|,$$ and thus

\begin{equation}\label{Df bound}
\frac{||D_zf||-L\epsilon}{(||D_zf||+L\epsilon)|t|}\leq\frac {||Df(z+t\:n(z))||}{|f(z+t\:n(z))|}\leq \frac{||D_zf||+L\epsilon}{(||D_zf||-L\epsilon)|t|}, \  \ \  \ \ L\epsilon \ll ||D_zf||.
\end{equation}

We note that there is a more geometric way to obtain the bound on $||D_yf||$ in (\ref{Df}) by looking at the ratio $\frac{h}{t}$, where $h$ is the increase (decrease) in ``rise'' that $f$ can obtain over the distance $t$ (so here we assume $t>0$) compared to the ``rise'' that $D_zf$ obtains over $t$, where $t$ is the length along the normal from $z$. This approach yields the same bounds as above (see Figure \ref{slope}). To see this,  we let $||D_yf_*||$ and $||D_yf^*||$ represent the minimum and maximum values $||Df||$ takes on $E$, respectively, and we look at the case where $0<||Df_*||\leq||D_yf||\leq||D_zf||$; the right hand side of (\ref{Df}) holds in a similar way if $||D_zf||\leq||D_yf||\leq||Df^*||<\infty$.

Notice that in Figure \ref{slope}, we see that $h<||D_zf||t-||Df_*||t$. The right side provides us with the greatest difference possible by assuming that $||Df||$ takes the value $||Df_*||$ at each point along the normal to $z$ between and including $z$ and $y$. This leads to the following inequality. 
$$\frac{h}{t}\leq|\:||D_zf||-||D_yf_*||\:|\leq||D_zf-D_yf_*||\leq L |z-y|\leq L\epsilon$$

And so, $$||D_zf||-L\epsilon\leq||D_zf|| - \frac{h}{t}=||D_yf||.$$

\begin{figure} [H]
\begin{center}

      \scalebox{2}{\input{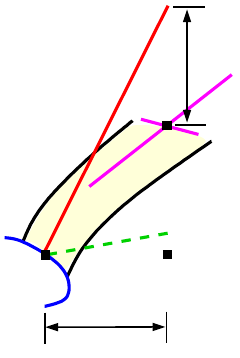_t}}
      
      \caption{Bounding the derivative at $y.$} 
      \label{slope}
 \end{center}
\end{figure}

 \noindent Since $A$ has positive reach, we can show that  

\begin{equation}\label{JH bound}
(1-\epsilon \hat{\kappa})^{n-1} \leq JH \leq(1+\epsilon \hat{\kappa})^{n-1},
\end{equation}

 \noindent where $\hat{\kappa} \equiv \frac{1}{\text{reach}(A)}.$\\

 \noindent Now, putting this all together, we have

 \begin{align}
& \lim_{k\rightarrow \infty}\left\{ \frac{1}{k}\int_{B(0,2R)}\left(\frac{||Df||}{|f|}\right)^{\frac{k-1}{k}}dy\right\} 
\nonumber	\\
&=\lim_{k\rightarrow \infty}\left\{ \frac{1}{k}\int_{B(0,2R)\setminus E}\left(\frac{||Df||}{|f|}\right)^{\frac{k-1}{k}}dy\right\} \nonumber + \lim_{k\rightarrow \infty}\left\{ \frac{1}{k}\int_E\left(\frac{||Df||}{|f|}\right)^{\frac{k-1}{k}}dy\right\} \nonumber\\
& = \lim_{k\rightarrow \infty}\left\{ \frac{1}{k}\int_{{E}}\left(\frac{||Df(y)||}{|f(y)|}\right)^{\frac{k-1}{k}}dy\right\}\nonumber \\
& = \lim_{k\rightarrow \infty}\left\{ \frac{1}{k}\int_{C_{\epsilon}}\left(\frac{||Df(z+t\:n(z))||}{|f(z+t\:n(z))|}\right)^{\frac{k-1}{k}}JHd\Hd^nw\right\}\nonumber\\ 
& = \lim_{k\rightarrow \infty}\left\{ \frac{1}{k}\int_{A}\ \int_{-\epsilon}^\epsilon \left(\frac{||Df(z+t\:n(z))||}{|f(z+t\:n(z))|}\right)^{\frac{k-1}{k}}JHdt\,d\Hd^{n-1}z\right\}\nonumber\\
& \leq \lim_{k\rightarrow \infty}\left\{ \frac{1}{k}\int_{A}\int_{-\epsilon}^\epsilon \left(\frac{||D_zf||+L\epsilon}{(||D_zf||-L\epsilon)|t|}\right)^{\frac{k-1}{k}}(1+\epsilon \hat{\kappa})^{n-1}dt\,d\Hd^{n-1}z\right\} \ \ \ \ L\epsilon \ll ||D_zf||\nonumber\\
& = (1+\epsilon \hat{\kappa})^{n-1}\lim_{k\rightarrow \infty}\left\{ \frac{1}{k}\int_{A}\left(\frac{||D_zf||+L\epsilon}{||D_zf||-L\epsilon}\right)^{\frac{k-1}{k}}\int_{-\epsilon}^\epsilon \left(\frac{1}{|t|}\right)^{\frac{k-1}{k}}dt\,d\Hd^{n-1}z\right\} \nonumber\\
& = 2(1+\epsilon \hat{\kappa})^{n-1}\lim_{k\rightarrow \infty}\left\{t^{\frac{1}{k}}\bigg\rvert_{0}^\epsilon\int_{A}\left(\frac{||D_zf||+L\epsilon}{||D_zf||-L\epsilon}\right)^{\frac{k-1}{k}} d\Hd^{n-1}z\right\}  \label{dom}
\end{align}

\noindent Since, by the Dominated Convergence Theorem with $g=2\geq \left(\frac{||D_zf||+L\epsilon}{||D_zf||-L\epsilon}\right)^{\frac{k-1}{k}}$,

\[\lim_{k\rightarrow \infty}\left\{\int_{A}\left(\frac{||D_zf||+L\epsilon}{||D_zf||-L\epsilon}\right)^{\frac{k-1}{k}}d\Hd^{n-1}z\right\}
=\int_{A}\frac{||D_zf||+L\epsilon}{||D_zf||-L\epsilon}\,d\Hd^{n-1}z,\]
we split the limit in (\ref{dom}) to give the following:

\begin{align*}
& 2(1+\epsilon \hat{\kappa})^{n-1}\lim_{k\rightarrow \infty}\left\{t^{\frac{1}{k}}\bigg\rvert_{0}^\epsilon\int_{A}\left(\frac{||D_zf||+L\epsilon}{||D_zf||-L\epsilon}\right)^{\frac{k-1}{k}} d\Hd^{n-1}z\right\} \nonumber\ \ \ \ \ (\ref{dom})\\ 
& = 2(1+\epsilon \hat{\kappa})^{n-1}\lim_{k\rightarrow \infty}\left\{\epsilon^{\frac{1}{k}}\right\}\lim_{k\rightarrow \infty}\left\{\int_{A}\left(\frac{||D_zf||+L\epsilon}{||D_zf||-L\epsilon}\right)^{\frac{k-1}{k}} d\Hd^{n-1}z\right\} \nonumber\\
& = 2(1+\epsilon \hat{\kappa})^{n-1}\int_{A}\frac{||D_zf||+L\epsilon}{||D_zf||-L\epsilon}\,d\Hd^{n-1}z.
\end{align*}

\noindent We have thus far shown that 
\begin{equation}
 \lim_{k\rightarrow \infty}\left\{ \frac{1}{k}\int_{B(0,2R)}\left(\frac{||Df||}{|f|}\right)^{\frac{k-1}{k}}dy\right\} \label{uniform} \\
\leq 2(1+\epsilon \hat{\kappa})^{n-1}\int_{A}\frac{||D_zf||+L\epsilon}{||D_zf||-L\epsilon}\,d\Hd^{n-1}z. 
\end{equation}

But $\frac{||D_zf||+L\epsilon}{||D_zf||-L\epsilon}\rightarrow 1$ uniformly as $\epsilon\rightarrow0$ since setting $\epsilon_n=\frac{1}{n}$ and picking $\eta>0$, we find that $\left|\frac{||D_zf||+L/n}{||D_zf||-L/n}-1\right|=\left|\frac{2L}{n||D_zf||-L}\right|<\eta$ for all $z\in A$ for $n$ large enough. Letting $\epsilon\rightarrow0$ in (\ref{uniform}), we have

\begin{align}
& \lim_{k\rightarrow \infty}\left\{ \frac{1}{k}\int_{B(0,2R)}\left(\frac{||Df||}{|f|}\right)^{\frac{k-1}{k}}dy\right\}  \nonumber\\
&\leq2\lim_{\epsilon\rightarrow0}\left\{ (1+\epsilon \hat{\kappa})^{n-1}\int_{A}\frac{||D_zf||+L\epsilon}{||D_zf||-L\epsilon}\,d\Hd^{n-1}z\right\}\nonumber\\
&=2\lim_{\epsilon\rightarrow0}\left\{ (1+\epsilon \hat{\kappa})^{n-1}\right\}\lim_{\epsilon\rightarrow0}\left\{\int_{A}\frac{||D_zf||+L\epsilon}{||D_zf||-L\epsilon}\,d\Hd^{n-1}z\right\}\nonumber\\
&=2\int_{A}\lim_{\epsilon\rightarrow0}\frac{||D_zf||+L\epsilon}{||D_zf||-L\epsilon}\,d\Hd^{n-1}z\nonumber\\
& =2\int_{A} d\Hd^{n-1} \nonumber\\
&=2 \mathcal{H}^{n-1}(A).\label{result1}
\end{align}


An analogous argument with the left inequalities of (\ref{Df bound}) and (\ref{JH bound}) above yields the reverse inequality to (\ref{result1}), and we have 
\[\lim_{k\rightarrow \infty}\left\{ \frac{1}{k}\int_{B(0,2R)}\left(\frac{||Df||}{|f|}\right)^{\frac{k-1}{k}}dx\right\}=2\mathcal{H}^{n-1}(\{x \ \vert \ f(x)=0\} ).\]
\end{proof}

\begin{rem}
Under the following assumptions, we can measure any level set for $f$ in Theorem \ref{integral}. If $c\in f(\R^n)$ is a regular value and $ f^{-1}\{c\} \subset B(0,R)$, then we simply define $g\equiv f-c$ and use $g$ in place of $f$ in the theorem.

\end{rem}



\section{A Simple Example}

Let $f:\R^2\rightarrow\R$ be $f=x^2+y^2-1$, a paraboloid with 0-level set $E\equiv\{(x,y)\ \vert \ x^2+y^2=1\}$, i.e. the unit circle. We know that twice the length of this set is $4\pi$, so we will show that the integral above indeed yields this result (see Figure \ref{example2}). 
\begin{figure}[H]
\centering
\includegraphics[width=12cm]{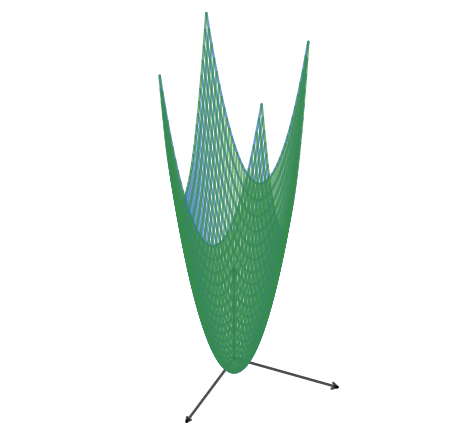}
\caption[The graph of $f=x^2+y^2-1.$]{The graph of $f=x^2+y^2-1$ with 0-level set the unit circle.}
\label{example2}
\end{figure}

For functions mapping from $\R^n\rightarrow \R$, $||Df||$ is simply the vector norm of the gradient since the operator norm measures the greatest expansion of any unit vector that occurs under $Df$, which occurs exactly in the direction of the gradient. Thus, $\nabla f = (2x,2y),$ and $|\nabla f|=2\sqrt{x^2+y^2}.$ If we consider $B(0,1)$ to be closed, then we shall set $B(0,2R)=B(0,2).$

 \begin{align}
 \lim_{k\rightarrow \infty}\left\{ \frac{1}{k}\int_{B(0,2R)}\left(\frac{||Df||}{|f|}\right)^{\frac{k-1}{k}}d\vec{x}\right\} &=2\lim_{k\rightarrow \infty}\left\{ \frac{1}{k}\int_{B(0,2)}\left(\frac{\sqrt{x^2+y^2}}{x^2+y^2-1}\right)^{\frac{k-1}{k}}dxdy\right\} \nonumber \\
 &=2\lim_{k\rightarrow \infty}\left\{ \frac{1}{k}\int_{B(0,1)}\left(\frac{\sqrt{x^2+y^2}}{1-x^2-y^2}\right)^{\frac{k-1}{k}}dxdy\right\} \nonumber \\
 &+2\lim_{k\rightarrow \infty}\left\{ \frac{1}{k}\int_{B(0,2)\setminus B(0,1)}\left(\frac{\sqrt{x^2+y^2}}{x^2+y^2-1}\right)^{\frac{k-1}{k}}dxdy\right\} \nonumber \\
  &=2\lim_{k\rightarrow \infty}\left\{ \frac{1}{k}\int_{0}^{2\pi}\int_{0}^{1}\left(\frac{r}{1-r^2}\right)^{\frac{k-1}{k}}rdrd\theta\right\} \nonumber \\
 &+2\lim_{k\rightarrow \infty}\left\{ \frac{1}{k}\int_{0}^{2\pi}\int_{1}^{2}\left(\frac{r}{r^2-1}\right)^{\frac{k-1}{k}}rdrd\theta\right\} \label{exam}
\end{align}

Using numerical integration techniques, we find the value of (\ref{exam}) rapidly converging to $4\pi\approx 12.566$ as predicted. For example, with $k=1000$, we have 
 \begin{align*}
  &2\lim_{k\rightarrow \infty}\left\{ \frac{1}{1000}\int_{0}^{2\pi}\int_{0}^{1}\left(\frac{r}{1-r^2}\right)^{\frac{999}{1000}}rdrd\theta\right\}\nonumber\\
+&2\lim_{k\rightarrow \infty}\left\{ \frac{1}{1000}\int_{0}^{2\pi}\int_{1}^{2}\left(\frac{r}{r^2-1}\right)^{\frac{999}{1000}}rdrd\theta\right\}\nonumber\\
&\approx 12.568.
 \end{align*}

For an analytic solution, Mathematica\textsuperscript{\textregistered} provides the following solution to (\ref{exam}).
 \begin{align*}
&2\lim_{k\rightarrow \infty}\left\{ \frac{1}{k}\int_{0}^{2\pi}\int_{0}^{1}\left(\frac{r}{1-r^2}\right)^{\frac{k-1}{k}}rdrd\theta\right\} 
 +2\lim_{k\rightarrow \infty}\left\{ \frac{1}{k}\int_{0}^{2\pi}\int_{1}^{2}\left(\frac{r}{r^2-1}\right)^{\frac{k-1}{k}}rdrd\theta\right\} \label{unit}\nonumber \\
 &=2\pi\lim_{k\rightarrow \infty}\left\{\frac{1}{k} \ \frac{2^\frac{-1}{k}\Gamma\left(\frac{3}{2}-\frac{1}{2k}\right)\Gamma\left(\frac{1}{k}\right)}{\Gamma\left(\frac{1}{2}\left(3+\frac{1}{k}\right)\right)}\right\}\nonumber\\
 &+2\pi\lim_{k\rightarrow \infty} \left\{\frac{1}{k} \ \Gamma\left(-\frac{k+1}{2k}\right) \left[ \frac{2^\frac{-1}{k}\Gamma\left(\frac{1}{k}\right)}{\Gamma\left(\frac{1}{2}\left(\frac{1}{k}-1\right)\right)}-2 \, {}_2\tilde{F}_1\left(\frac{k-1}{k}, \, -\frac{k+1}{2k}; \, \frac{k-1}{2k};\,\frac{1}{4} \right)\right]\right\}\nonumber\\
  &=2\pi\lim_{k\rightarrow \infty}\left\{\frac{1}{k} \ 2^\frac{-1}{k}\Gamma\left(\frac{1}{k}\right)\right\}\nonumber\\
 &+2\pi\lim_{k\rightarrow \infty} \left\{\frac{1}{k} \  \left[ 2^\frac{-1}{k}\Gamma\left(\frac{1}{k}\right)-2\Gamma\left(\frac{-1}{2}\right) \, {}_2\tilde{F}_1\left(1, \, \frac{-1}{2}; \, \frac{1}{2};\,\frac{1}{4} \right)\right]\right\}\nonumber\\
 &=2\pi\lim_{k\rightarrow \infty}\left\{ 1+1-\frac{2}{k}\Gamma\left(\frac{-1}{2}\right)\frac{-2+\text{coth}^{-1}(2)}{2\sqrt{\pi}}\right\}\nonumber\\
  &=2\pi\lim_{k\rightarrow \infty}\left\{ 1+1-\frac{2}{k}(-2\sqrt{\pi})(.409\dots)\right\}\nonumber\\
 &=4\pi
 \end{align*}

We note that the regularized hypergeometric function $${}_2\tilde{F}_1(b,a;c;z)\equiv\frac{{}_2F_1(a,b;c;z)}{\sqrt{\pi}},$$ where ${}_2F_1(a,b;c;z)$ is the hypergeometric function, which in turn is defined by the power series $$\sum_{n=0}^\infty\frac{(a)_n(b)_n}{(c)_n}\frac{z^n}{n!}.$$ Here $|z|<1$ and $(q)_n$ is the rising Pochhammer symbol defined by 
\[(q)_n = \left\{ \begin{array}{rcl}
 1 &n=0 \\
q(q+1)\cdots(q+n-1) & n>0. \end{array}\right. \]

\noindent Further, note above that $\lim_{k\rightarrow \infty}\left\{\frac{1}{k} \Gamma\left(\frac{1}{k}\right)\right\}=1.$


\section{A $C^1$ Boundary Measure for $n=2$}
We next present the second main result, that of computing the $\mathcal{H}^{1}$-measure of an embedded $C^1$ submanifold in $\R^2$ using the singular integral in (\ref{integral}) above. 

\begin{thm}\label{C1}
Let  $f(x)=\inf\{|x-y| \ \vert \ y\in A\}:\R^2\rightarrow \R$ be the \emph{\textbf{distance function}} to $A\subset\R^2$.  Further, suppose $A$ is a $C^1$ closed embedded 1-dimensional submanifold of $\R^2$ such that $A\subset B(0,R)$ (closed) and $\mathcal{H}^{1}(A)<\infty$. Then we have
\begin{equation}\label{int}
\lim_{k\rightarrow \infty}\left\{ \frac{1}{k}\int_{B(0,2R)}\left(\frac{|Df|}{|f|}\right)^{\frac{k-1}{k}}dy\right\}=2\mathcal{H}^{1}(A).
\end{equation}
\end{thm}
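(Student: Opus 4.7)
The plan rests on two special features of the distance function. First, since $f$ is $1$-Lipschitz it is differentiable a.e., and at every $y \notin A$ where the nearest point $p(y) \in A$ is unique one has $Df(y) = (y - p(y))/f(y)$, so $|Df(y)| = 1$. The non-uniqueness set has planar measure zero for any closed set, hence the integrand $(|Df|/|f|)^{(k-1)/k}$ reduces to $f^{-(k-1)/k}$ a.e. Second, on the compact set $B(0,2R) \setminus U_\epsilon$, where $U_\epsilon$ denotes the open $\epsilon$-neighborhood of $A$, the integrand is continuous and bounded, so its contribution is $O(1)$ and is killed by the factor $1/k$. It therefore suffices to compute $\lim_{k\to\infty} \frac{1}{k}\int_{U_\epsilon} f^{-(k-1)/k}\,dy$ and then send $\epsilon \to 0$.

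The ``barehanded'' step replaces the global diffeomorphism $H$ of Theorem~\ref{first} by a finite local construction. Because $A$ is a compact $C^1$ embedded $1$-submanifold, its arc-length parametrization $\gamma$ is a homeomorphism onto $A$, so by uniform continuity of $\gamma^{-1}$ there exists $\rho > 0$ such that any two points of $A$ within Euclidean distance $\rho$ lie on a short common sub-arc. Fixing a tolerance $\delta > 0$, I partition $A$ into closed arcs $A_1,\ldots,A_N$ short enough that, after a rigid motion, the union $A_i \cup A_{i\pm 1}$ is a $C^1$ graph $t = h_i(s)$ over an interval $I_i$ with $|h_i'| \leq \tan\delta$. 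Taking $\epsilon < \rho/4$ and writing $y = (s, h_i(s) + u)$ inside the straight strip $S_i = \{(s,t) : s\in I_i,\ |t - h_i(s)| < \epsilon\}$, the bound $f(y) \leq |u|$ is immediate, while $f(y) \geq (\cos\delta)|u|$ follows because the choice of $\epsilon$ forces the nearest point of $A$ to $y$ to lie in $A_i \cup A_{i\pm 1}$, whose small slope keeps the shortest segment from $y$ to $A$ within angle $\delta$ of the $t$-axis. The substitution $u = t - h_i(s)$ has Jacobian $1$, and the inner integral $\int_{-\epsilon}^{\epsilon} |u|^{-(k-1)/k}\,du = 2k\epsilon^{1/k}$ cancels the $1/k$ in front, giving
\[
\lim_{k\to\infty}\frac{1}{k}\int_{S_i} f^{-(k-1)/k}\,dy \;\in\; \bigl[\,2|I_i|,\ 2|I_i|/\cos\delta\,\bigr],
\]
which approximates $2\mathcal{H}^1(A_i)$ to within a multiplicative $O(\delta)$ since $\mathcal{H}^1(A_i) = \int_{I_i}\sqrt{1 + h_i'^2}\,ds \in \bigl[|I_i|,\ (\sec\delta)|I_i|\bigr]$.

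The main obstacle, and the precise reason the Area Formula argument of Theorem~\ref{first} fails here, is gluing the local estimates in the absence of positive reach: normals to $A$ may meet inside any $U_\epsilon$, so $U_\epsilon$ is \emph{not} a disjoint union of the strips $S_i$ and there is no global diffeomorphism $C_\epsilon \to U_\epsilon$. I will handle this by a two-sided squeeze. For the upper bound, any $y \in U_\epsilon$ has its nearest $A$-point on some $A_j$ (uniqueness up to a null set), and the graph representation of $A_j \cup A_{j\pm 1}$ places $y$ inside a slightly thickened strip $\tilde S_j$; the multiplicity of the resulting cover $\{\tilde S_j\}$ is bounded by a constant depending only on $\delta$. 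Summing the per-strip estimate and letting $k\to\infty$ yields at most $2\mathcal{H}^1(A)(1 + O(\delta))$. For the lower bound I pass to ``core'' sub-strips over sub-arcs shortened at each end by an amount $\sim \epsilon\tan\delta$; these are pairwise disjoint for $\epsilon$ small enough, and summing without overcounting gives at least $2\mathcal{H}^1(A)(1 - O(\delta))$. Sending $k\to\infty$ first, then $\epsilon \to 0$, and finally $\delta \to 0$ completes the squeeze and yields $2\mathcal{H}^1(A)$.
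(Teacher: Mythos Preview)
Your proposal is correct and follows essentially the same barehanded strategy as the paper: reduce to an $\epsilon$-neighborhood using $|Df|=1$ a.e., cover $A$ by finitely many local pieces on which the tangent direction varies by at most a small angle so that $(\cos\theta)|u|\le f(y)\le |u|$ in graph coordinates, integrate the resulting $|u|^{-(k-1)/k}$ to get the $2k\epsilon^{1/k}$ that cancels the $1/k$, and finally send the angular tolerance to zero. The only difference is in the bookkeeping of the gluing step---where the paper tracks the overlap of its rectangles as a vanishing fraction $\lambda$ of the summed integral, you organize the squeeze as a covering upper bound plus a disjoint-cores lower bound, which is a clean variant of the same idea (your passing mention of bounded multiplicity is unnecessary for the upper bound, since the inequality $\int_{U_\epsilon}\le\sum_j\int_{\tilde S_j}$ already holds without it).
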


\begin{proof}
 As $A$ is a closed submanifold, i.e. it is compact without boundary, we assume $A\neq \emptyset,$ which implies that $\mathcal{H}^{1}(A)>0.$ We also note that $|Df|\equiv 1 \ \Hd^2$-a.e.; $Df$ is undefined on $A$ itself; and that $A$ being $C^1$  means that $A$ is locally the graph of a $C^1$ function $g:\R\rightarrow \R$. 


\begin{enumerate}
\item
\begin{enumerate}
\item
Since $A$ is an embedded submanifold, then globally speaking, no matter how close one portion of $A$ comes to another portion, there is always at least a distance $\eta^*>0$ between them. We will chose $\eta$ below in such a way that $\eta\le \eta^*/4$ so that we may cover $A$ with rectangles extending outward $\eta$ on both sides without overlapping (see Figure \ref{global}).\\

\begin{figure} [H]
\centering
      \scalebox{.8}{\input{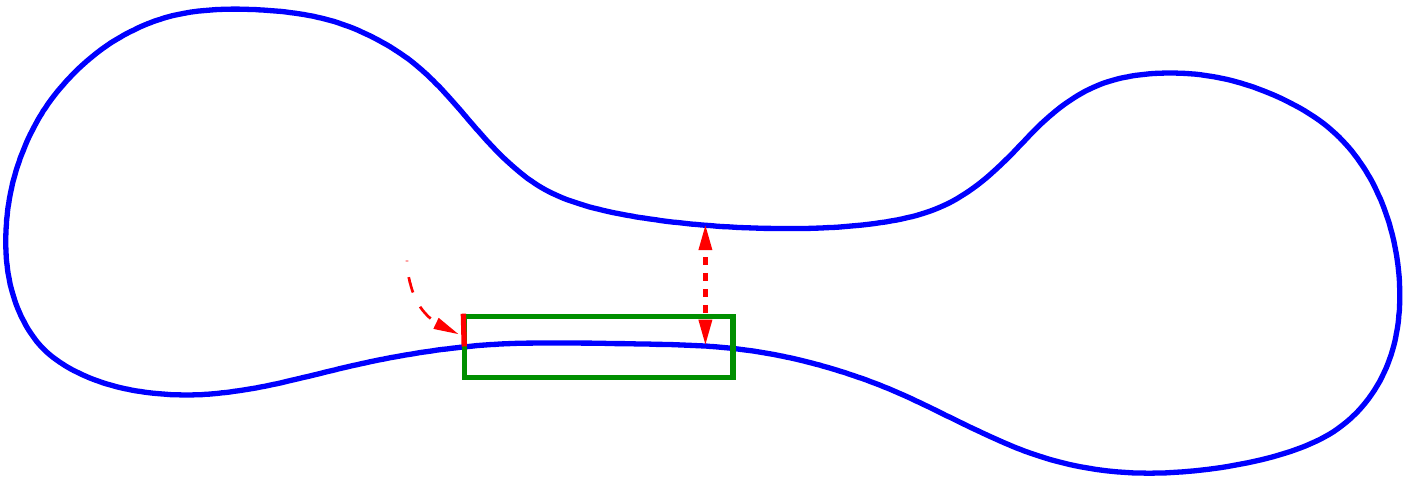_t}}
     
      \caption{\label{global}Global distance of $A$ to itself. } 
\end{figure}

\item

Since $A$ is a compact submanifold of $\R^2$, it is a compact set in $\R^2$, hence we can cover $A$ with a finite number of closed disks $B(x_i,\delta)$ with $x_i\in A$ for $i=1,2,\dots,M(\delta),$ where $\delta$ is the fineness of our cover and $M(\delta)$ is the number of disks. Since $A$ is $C^1$, for each $x_i$, we define $T_i^{}\equiv T_{x_i}A\cap B(x_i,\delta)$, the portion of the linear approximation to $A$ at the point $x_i$ contained in each ball $B(x_i,\delta)$, which is naturally a diameter of $B(x_i,\delta)$ and thus has length $2\delta$ (see Figure \ref{disks}). 

\begin{figure} [H]
\centering
      \scalebox{1.1}{\input{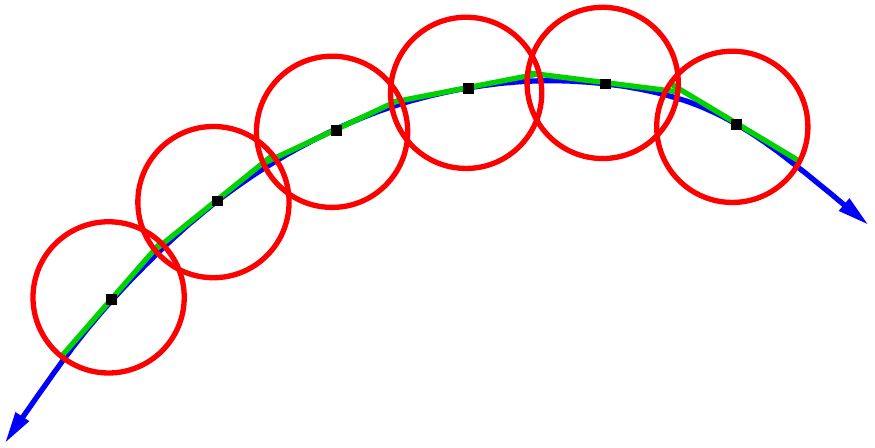_t}}
     
      \caption{\label{disks}Covering $A$ with disks $B(x_i,\delta)$.} 
\end{figure}

\item

Using the fact that $f$ is continuous on the compact set $B(0,2R)$  and that $|Df|\equiv 1$ $\Hd^2$-a.e., we see that the integrand in (\ref{int}) is bounded outside of a neighborhood $E$ of $A$ contained in $B(0,2R) $ (see Figure \ref{ball}). Thus we have
\begin{equation}\label{E}
\lim_{k\rightarrow \infty}\left\{ \frac{1}{k}\int_{B(0,2R)}\left(\frac{|Df|}{|f|}\right)^{\frac{k-1}{k}}dy\right\}= \lim_{k\rightarrow \infty}\left\{ \frac{1}{k}\int_{E}\left(\frac{|Df|}{|f|}\right)^{\frac{k-1}{k}}dy\right\}.
\end{equation}
\begin{figure} [H]
\centering
      \scalebox{.7}{\input{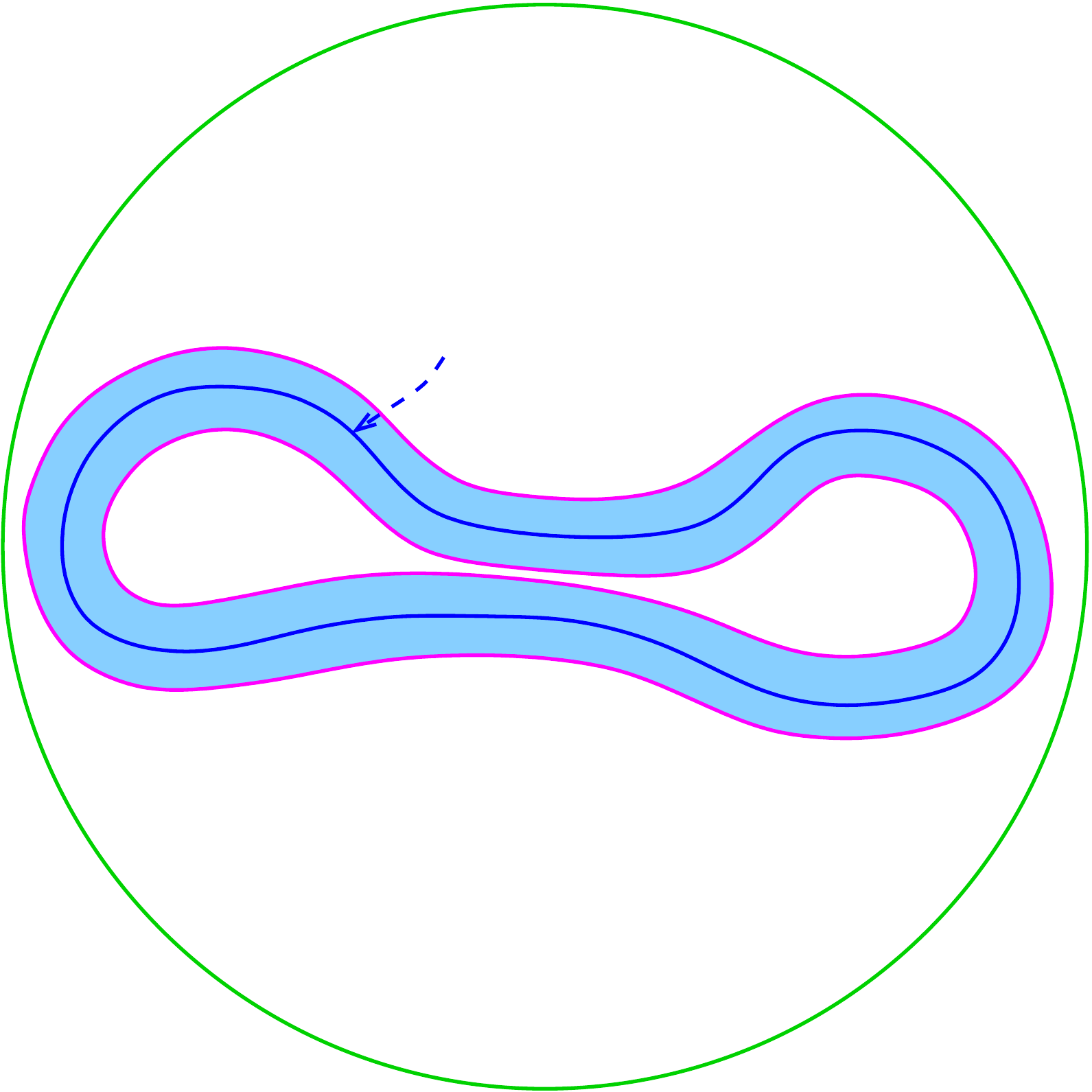_t}}
     
      \caption[Taking the limit inside $B(0,2R)$]{\label{ball}The $1/k$ sends everything outside $E$ to 0 in the limit. } 
\end{figure}
\end{enumerate}

\item
\begin{enumerate}
\item
Let $E_i^\eta$ be a rectangle that is centered on $T_i$ with height $2\eta$ and width $2\delta$ for $i=1,2,\dots,M(\delta)$ and with $\eta$ chosen as above. We will integrate in (\ref{int}) over each $E_i^\eta$ and sum the results so that we can use the normals $\vec{N}$ to each $T_i$ to approximate the normals $\vec{n}$ to $A\cap E_i^\eta$. However, we need to bound how much ``overlap'' there is between the $E_i^\eta$'s so that the integral over the collection $\{E_i^\eta\}_{i=1}^{M(\delta)}$ converges to the integral over  $\cup_{i=1}^{M(\delta)} E_i^\eta$ as $\delta\rightarrow0.$ We do this in the following way, noting that since $A\subset\cup_{i=1}^{M(\delta)} E_i^\eta\subset B(0,2R)$, we can apply (\ref{E}) above (see Figure \ref{rectangles}).

\begin{figure} [H]
\centering
      \scalebox{1.3}{\input{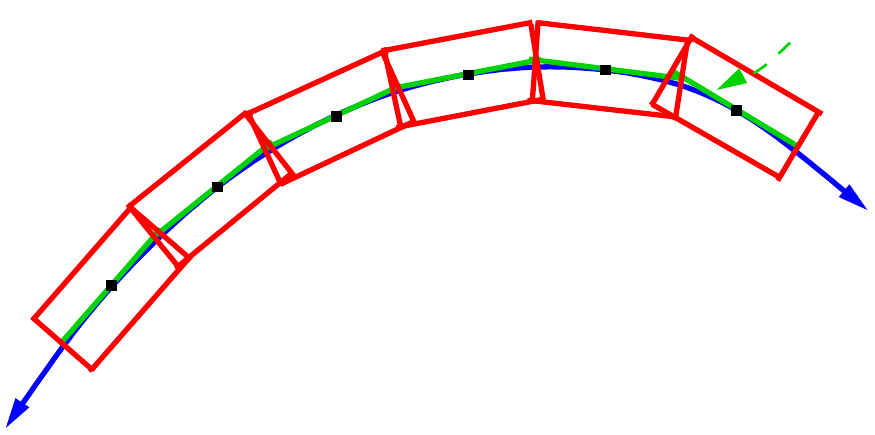_t}}
     
      \caption{\label{rectangles}Covering $A$ with rectangles $E^\eta_i$. } 
\end{figure}

\item Let $w,z\in A$ and  $\alpha>0$. By the \textit{uniform continuity} of $Dg$, (locally) we can choose $\delta$ small enough so that 
\begin{equation*}\label{cont1}
|w-z|<\delta \Rightarrow |D_wg-D_{z}g|<\alpha.
\end{equation*}

 \noindent But this implies that
\begin{equation*}\label{cont2}
|g(w)-g(z)|<\alpha\delta\equiv\epsilon.
\end{equation*}

We pick $\delta$ small enough so that letting $\eta\equiv\sqrt{\alpha}\delta,$ we still have $\eta\le \eta^*/4$ from above. Then

\begin{equation*}\label{ratio0}
\frac{\eta}{\epsilon}=\frac{\sqrt{\alpha}\delta}{\alpha\delta}=\frac{1}{\sqrt{\alpha}}\rightarrow \infty
\end{equation*}
as $\alpha,\delta\rightarrow0$.

Similarly,

\begin{equation}\label{ratio2}
\frac{\delta}{\eta}=\frac{\delta}{\sqrt{\alpha}\delta}=\frac{1}{\sqrt{\alpha}}\rightarrow \infty
\end{equation}
as $\alpha,\delta \rightarrow0$.

 \noindent Lastly,  we have
\begin{equation}\label{ratio1}
\frac{\delta}{\epsilon}=\frac{\delta}{\alpha\delta}=\frac{1}{\alpha}\rightarrow \infty
\end{equation}
as  $\alpha,\delta \rightarrow0$.

By the geometric definition of the derivative \cite{vixie-geometric-derivative}, we observe that $A\cap E_i^\eta$ is contained in a cone $K_i^\epsilon$ centered on $x_i$ and $T_i$ of max height 2$\epsilon$ (at a distance $\delta$ from $x_i$ along $T_i$) and angular width $2\theta$ (see Figure \ref{cone1}).

\end{enumerate}

\begin{figure} [H]
\centering
      \scalebox{.8}{\input{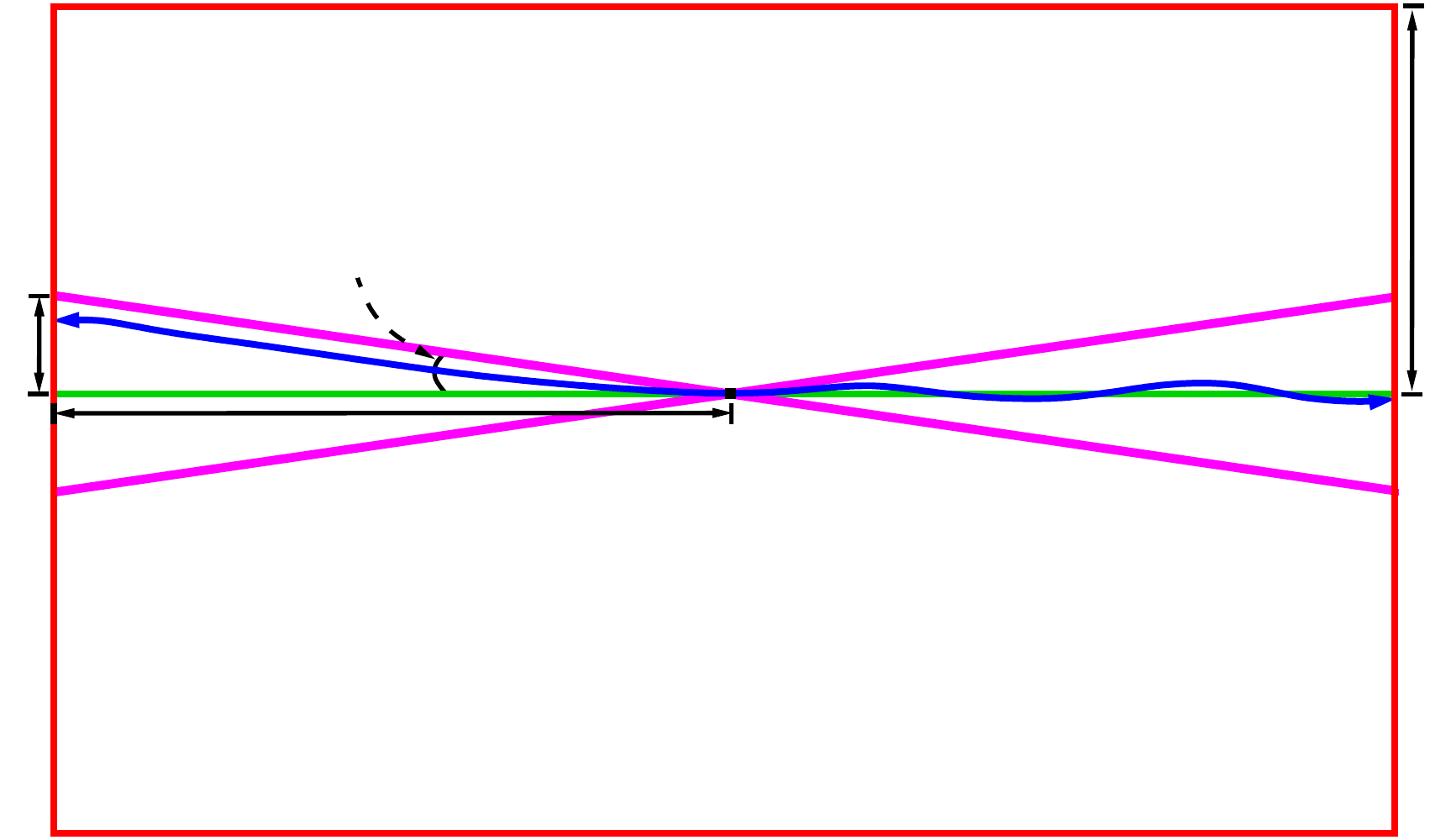_t}}
     
      \caption[$A\cap E^\eta_i$ is contained in $K^\epsilon_i$]{\label{cone1}For each $x_i, A\cap E^\eta_i$ is contained the cone $K^\epsilon_i$. Note that the image is not drawn to scale since $\delta \gg\eta \gg\epsilon.$ } 
\end{figure}
\item
\begin{enumerate}
\item

We now specify the  manner in which we cover $A$ with disks, and thus with $E_i^\eta$'s. We wish for the ``overlaps'' of the $E_i^\eta$'s to contain at least an $\eta/2$-neighborhood of $A$, and we want 

 $$\Hd^2(E_i^\eta\cap E_j^\eta)\le (2\eta)^2=4\eta^2, \ \ i\neq j.$$
 
 By choosing $\delta$ small enough, we can arrange each $E_i^\eta$ so that it intersects with its neighbors only in the $2\eta$-squares on each end of each rectangle $E_i^\eta$ (see Figure \ref{intersect}).


More precisely, 
\begin{equation*}\label{ratio4}
\Hd^2(E_i^\eta)=2\delta2\eta \gg 4\eta^2\geq \Hd^2(E_i^\eta\cap E_j^\eta), \ \ i\neq j,
\end{equation*}
since (\ref{ratio2}) above implies 
\begin{equation*}\label{ratio4}
\delta \gg \eta.
\end{equation*}
Thus sending $\delta$ to 0 will send the total area of the pairwise ``overlap'' of the $E_i^\eta$'s to 0 faster than the area of the collection $\{E_i^\eta\}_{i=1}^{M(\delta)}$ goes to 0.

\begin{figure} [H]
\centering
      \scalebox{.7}{\input{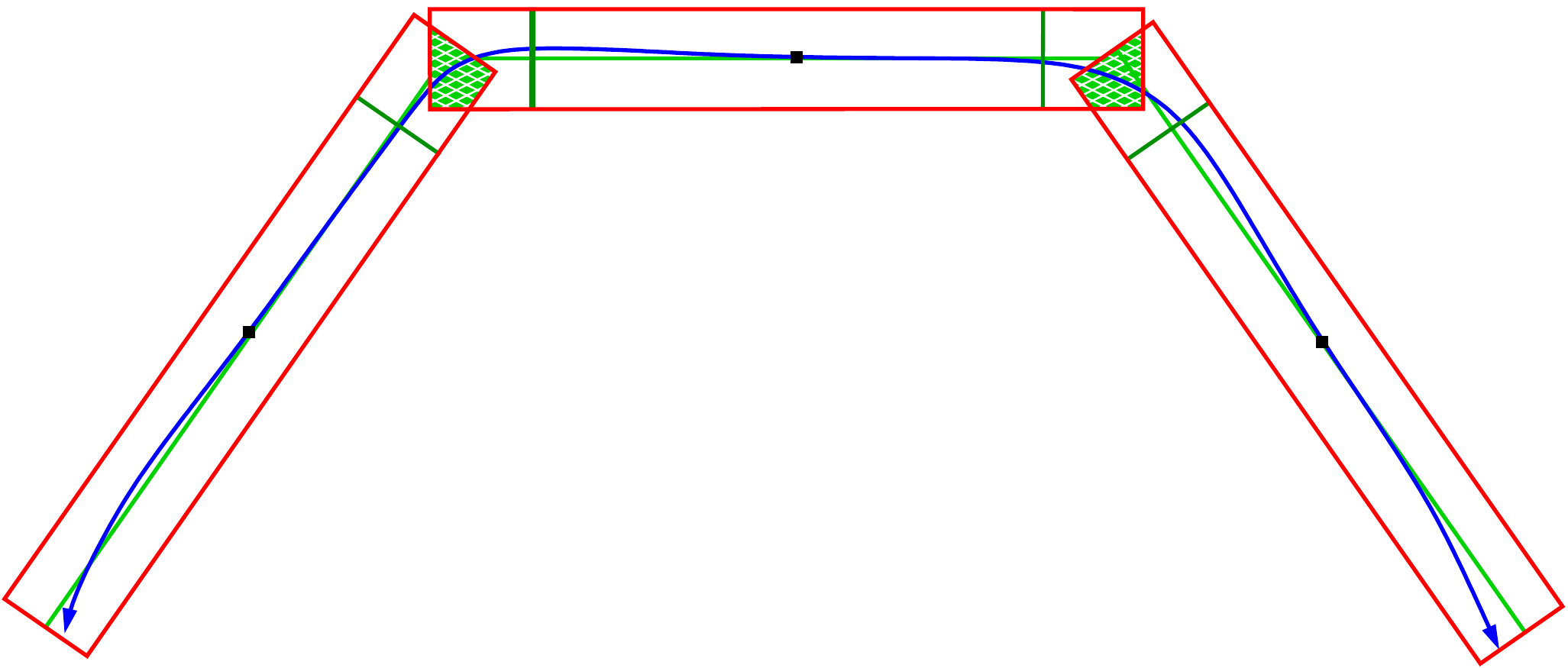_t}}
     
      \caption{\label{intersect}Overlaying $A$ with $E^\eta_i$'s} 
\end{figure}

\item  We set 
$$ I =  \frac{1}{k}\int_{\Omega}\left(\frac{|Df|}{|f|}\right)^{\frac{k-1}{k}}dz,$$
where $\Omega\equiv \{E_i^\eta\cap E_{j}^\eta\}_{i\neq j},$ the collection of ``overlap'' of all the $E_i^\eta$'s taken pairwise.  Now we can write

\begin{align*}\label{I}
& \lim_{k\rightarrow \infty}\left\{ \frac{1}{k}\int_{\cup_{i=1}^{M(\delta)} E_i^\eta}\left(\frac{|Df|}{|f|}\right)^{\frac{k-1}{k}}dz\right\}\nonumber\\
&= \lim_{k\rightarrow \infty}\left\{ \frac{1}{k}\int_{\{E_i^\eta\}_{i=1}^{M(\delta)}}\left(\frac{|Df|}{|f|}\right)^{\frac{k-1}{k}}dz-I\right\} \nonumber\\
&=\lim_{k\rightarrow \infty}\left\{ \frac{1-\lambda}{k}\int_{\{E_i^\eta\}_{i=1}^{M(\delta)}}\left(\frac{|Df|}{|f|}\right)^{\frac{k-1}{k}}dz\right\} \nonumber\\
\end{align*}

since $I$ is some  fraction $0\leq\lambda\equiv\lambda(\delta)<1$ of the same integral  taken over the whole (finite) collection $\{E^\eta_i\}_{i=1}^{M(\delta)}$.   Based on the argument just given, $\lambda\rightarrow0$ as $\delta\rightarrow 0$.



\end{enumerate}
\item
\begin{enumerate}
\item We now wish to find bounds on $f$ by  showing that the normals to $A\cap E_i^\eta$ converge to the normals to  $T_i$ as $\delta\rightarrow0$. The key lemma here is as follows:

\begin{lem}

Let $\epsilon> \epsilon^*>0$ and let $K^\epsilon_i$ be the cone of angular width $2\theta$ centered on $x\in T_i$ containing $A\cap E_i^\eta$. There exists a cone $C^{\epsilon^*}_i$ such that we may translate it (without tipping)  along $A\cap E_i^\eta$ so that $A$ is contained inside $C^{\epsilon^*}_i$ for all points of $A$ within $\delta^*<\delta$ of its center point (see Figure \ref{cone2}).
\end{lem}


\begin{proof}
Let $\delta>0$ be as above and $w,z\in A\cap E^\epsilon_i$. Then from the uniform continuity of $Dg$, we find that $|w-z|<\delta$ implies $\angle (D_wg,D_zg)\le 2\theta$, that is the angle between $D_wg$ and $D_zg$ is bounded by $2\theta$, which defines  the angular width of $K^\epsilon_i$.     Thus the normals $\vec{n}$ to $A\cap E^\epsilon_i$ can only deviate from the normals $\vec{N}$ to $T_i$ by at most $\theta=\tan^{-1}(\frac{\epsilon}{\delta})$.  

Now, let $a\in A\cap E^\epsilon_i$ and let $\vec{N}(x)$ be the normal to $T_i$ containing $a$. Pick a point $z=(x,t) \in \vec{N}(x)$, where $t\le \epsilon$ and draw the line segments through $z$ on each side of $\vec{N}(x)$ forming an angle $\theta$ with $\vec{N}(x)$. Without loss of generality, pick one of these segments and label its intersection with $A$ as  $\oldhat{a}\in A\cap E^\epsilon_i$. We find, from the above argument,  that $\vec{n}(\oldhat{a})$, the normal to $\oldhat{a}$, cannot deviate from its corresponding $\vec{N}(\oldhat{x})$ by more than $\theta$, and thus $\angle (D_ag,D_{\oldhat{a}}g)\le 2\theta$. But this implies that $\oldhat{a}$ must be inside the cone $C^{\epsilon^*}_i$ centered on $a$ whose center line $l$ is parallel to $T_i$.

Letting $z=(x,t)$ be any point along $\vec{N}(x)$ with $t\le \epsilon$, we find that the corresponding $\oldhat{a}\in C^{\epsilon^*}_i$, which proves the assertion. We observe that from \eqref{ratio1}, as $\delta\rightarrow0, \theta$ also goes to 0.

\end{proof}

\item

As shown in  Figure \ref{cone2}, using our lemma we wish to bound $f(z)$ for $z=(x,t)\in E_i^\eta$ such that $x\in T_i$ and $t\in[-\eta,\eta]$ is the distance to $x$ in the direction of the normal $\vec{N}(x)$ to $T_i$. But if we translate $C^{\epsilon^*}_i$ to each $z\in A\cap E_i^\eta$ (keeping its center line $l$ parallel to $T_i$), we find (with $0\le\theta<\pi/2$)
 
\begin{equation*}\label{boundf}
 \cos(\theta)|t^*|\leq|f(z^*)|\leq |t^*|,
\end{equation*}



\noindent which implies

\begin{equation}\label{bound}
\frac{1}{|t^*|}\leq \frac{1}{|f(z^*)|}\leq \frac{1}{\cos(\theta)|t^*|}.
\end{equation}

Here, $z^*=(x,t^*)$, where $t^*$ represents the distance from $z=(x,t)$ to $a\in A$ along the normal $\vec{N}(x)$ instead of the distance to $x\in T_i$. The actual distance $t$ to $T_i$ is either $t^*+ \beta$ or  $t^*- \beta$, where $0\le \beta\le \epsilon$, depending on what side of $T_i$ 
$a$ is and since $A$ is contained in $K^\epsilon_i$.

\begin{figure} [H]
\centering
      \scalebox{1.3}{\input{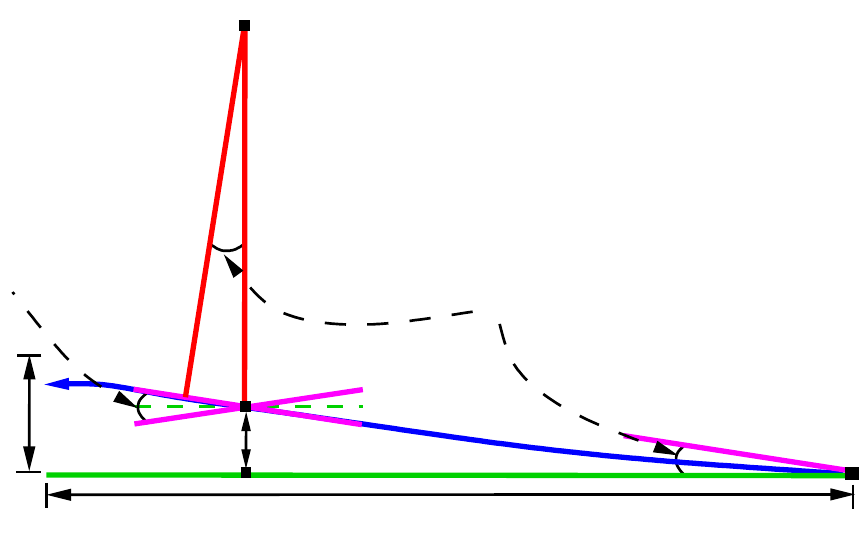_t}}
     
      \caption[Translating $C^{\epsilon^*}_i$ along $A$]{\label{cone2}Translating $C^{\epsilon^*}_i$ along $A$ so that line $l$ is always parallel to $T_i$ and  $t^*$ is the distance from $z$ to $a$.} 
\end{figure}

Now, we wish to integrate $1/|f(z)|$ over $[-\eta,\eta]$, with $\eta$ being measured from $T_i$, but to use our bound in (\ref{bound}), we make the substitution $z^*=(x,t)\pm(0,\beta_x)$, with $t^*=t\pm\beta_x$, so that $dz^*=dz$ and $dt^*=dt$. Note that the points $(x,t)$ and $(x,t^*)$ represent the same point in $\R^2$, but when integrating, we will have different bounds in the integral since our datum is different if our point $a\in A$ is not also in $T_i$, that is if $a\neq x$. Therefore, in the case when $a$ is above $T_i$, we let our bounds be $\hat{\eta}^+_x=\eta-\beta_x$ and $\hat{\eta}^-_x=-(\eta+\beta_x),$ where $\beta_x$ depends on $x\in T_i$. In the case when $a$ is below $T_i$, we have $\hat{\eta}^+_x=\eta+\beta_x$ and $\hat{\eta}^-_x=-(\eta-\beta_x)$ (see Figure \ref{bounds}).

\begin{figure} [H]
\centering
      \scalebox{1.3}{\input{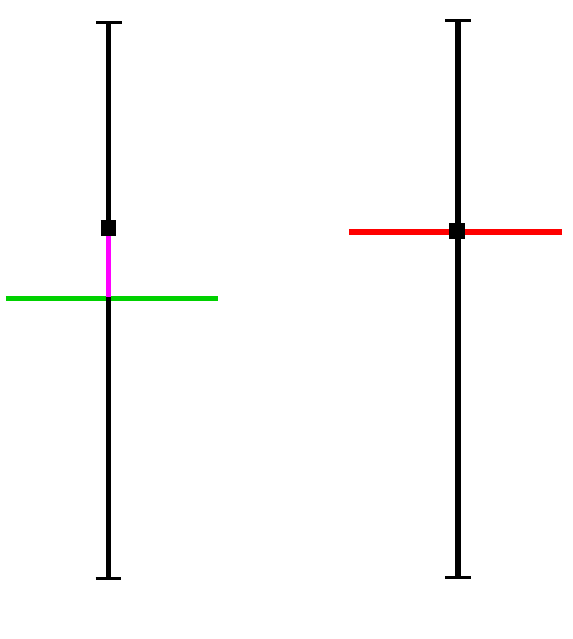_t}}
     
      \caption[Bounds of integration]{\label{bounds}On the left, we integrate $1/|f(z)|=1/|f(x,t)|$ over $[-\eta,\eta]$ with $t$ measured as the distance from $x\in T_i$. On the right, we integrate $1/|f(z^*)|=1/|f(x,t^*)|$ over $[\hat{\eta}^-_x,\hat{\eta}^+_x]$ with $t^*$ measured as the distance from $a\in l$. Both are for the case where $a$ is above $ T_i$.} 
\end{figure}

In either case, we are integrating over the same interval. Since all terms in (\ref{bound}) are positive, this will be greater than integrating over $t\in[-(\eta-\epsilon),\eta-\epsilon]$ and less than integrating over $t\in[-(\eta+\epsilon),\eta+\epsilon]$, and we have

\begin{align*}
\int_{-\eta}^{\eta}\left(\frac{1}{|f(z)|}\right)^{\frac{k-1}{k}}dz&=\int_{\hat{\eta}^-_x}^{\hat{\eta}^+_x}\left(\frac{1}{|f(z^*)|}\right)^{\frac{k-1}{k}}dz^* \nonumber\\
&\le\int_{\hat{\eta}^-_x}^{\hat{\eta}^+_x}\left(\frac{1}{\cos(\theta)|t^*|}\right)^{\frac{k-1}{k}}dt^* \nonumber\\
& \le \int_{-(\eta+\epsilon)}^{\eta+\epsilon}\left(\frac{1}{\cos(\theta)|t|}\right)^{\frac{k-1}{k}}dt,
\end{align*}

where $k=1,2,3,\dots$. Similarly, 

\begin{align}
\int_{-\eta}^{\eta}\left(\frac{1}{|f(z)|}\right)^{\frac{k-1}{k}}dz&=\int_{\hat{\eta}^-_x}^{\hat{\eta}^+_x}\left(\frac{1}{|f(z^*)|}\right)^{\frac{k-1}{k}}dz^* \nonumber\\
&\ge\int_{\hat{\eta}^-_x}^{\hat{\eta}^+_x}\left(\frac{1}{|t^*|}\right)^{\frac{k-1}{k}}dt^* \nonumber\\
& \ge \int_{-(\eta-\epsilon)}^{\eta-\epsilon}\left(\frac{1}{|t|}\right)^{\frac{k-1}{k}}dt.\label{etaxlower}
\end{align}

\end{enumerate}

\item
Putting this all together, we now have that 
\begin{align}
&\lim_{k\rightarrow \infty}\left\{ \frac{1}{k}\int_{B(0,2R)}\left(\frac{|Df|}{|f|}\right)^{\frac{k-1}{k}}dz\right\}\nonumber\\
&= \lim_{k\rightarrow \infty}\left\{ \frac{1}{k}\int_{\cup_{i=1}^{M(\delta)} E_i^\eta}\left(\frac{|Df|}{|f|}\right)^{\frac{k-1}{k}}dz\right\}\nonumber\\
&= \lim_{k\rightarrow \infty}\left\{ \frac{1}{k}\int_{\{E_i^\eta\}_{i=1}^{M(\delta)}}\left(\frac{|Df|}{|f|}\right)^{\frac{k-1}{k}}dz-I\right\} \nonumber\\
&=\lim_{k\rightarrow \infty}\left\{ \frac{1-\lambda}{k}\sum_{i=1}^{M(\delta)}\int_{E_i^\eta}\left(\frac{1}{|f|}\right)^{\frac{k-1}{k}}dz\right\}\nonumber\\
&=\lim_{k\rightarrow \infty}\left\{ \frac{1-\lambda}{k}\sum_{i=1}^{M(\delta)}\int_{T_i}\int_{-\eta}^\eta\left(\frac{1}{|f|}\right)^{\frac{k-1}{k}}dt\,d\Hd^{1}(x)\right\}\nonumber\\
&\leq \lim_{k\rightarrow \infty}\left\{ \frac{1-\lambda}{k}\sum_{i=1}^{M(\delta)}\int_{T_i}\int_{-(\eta+\epsilon)}^{\eta+\epsilon}\left(\frac{1}{\cos(\theta) |t|}\right)^{\frac{k-1}{k}}dt\,d\Hd^{1}(x)\right\}\nonumber \\
&=\lim_{k\rightarrow \infty}\left\{ \frac{1-\lambda}{k}\int_{-(\eta+\epsilon)}^{\eta+\epsilon}\left(\frac{1}{|t|}\right)^{\frac{k-1}{k}}dt\left(\frac{1}{\cos(\theta) }\right)^{\frac{k-1}{k}}\sum_{i=1}^{M(\delta)}\int_{T_i}d\Hd^{1}(x)\right\}\nonumber\\
&=\lim_{k\rightarrow \infty}\left\{ \frac{1-\lambda}{k}\int_{-(\eta+\epsilon)}^{\eta+\epsilon}\left(\frac{1}{|t|}\right)^{\frac{k-1}{k}}dt\right\}\lim_{k\rightarrow \infty}\left\{\left(\frac{1}{\cos(\theta) }\right)^{\frac{k-1}{k}}\sum_{i=1}^{M(\delta)}\Hd^{1}(T_i)\right\}\nonumber\\
&=2(1-\lambda)\lim_{k\rightarrow \infty}\left\{ (\eta+\epsilon)^{\frac{1}{k}}\right\}\frac{1}{\cos(\theta) }\sum_{i=1}^{M(\delta)}\Hd^{1}(T_i) \nonumber \\
&=\frac{2(1-\lambda)}{\cos(\theta)}\sum_{i=1}^{M(\delta)} \Hd^{1}(T_i).\label{boundt}
\end{align}


\item We now let $\delta\rightarrow 0$ in (\ref{boundt}), since the above process holds for each cover of $A$ we choose according to our procedure. Let $\delta=1/n$ and define the partial sum of the series in (\ref{boundt}) above as $$S_n\equiv \sum_{i=1}^{M(1/n)}\Hd^{1}(T_i).$$

We see that $\{S_n\}$ is monotonically increasing and bounded by $\Hd^{1}(A)<\infty$ except for an error term. This is due to the fact that in the ``overlap'' of the $E_i^\eta$'s, there may be normals to $A$ that intersect more than one $T_i$ with the result that a ``double-counted'' length of at most $2\eta$ will be included in the sum above for each $T_i$. For a given $\delta$, this error term is bounded by $2M(\delta)\eta$. Yet we know that $\eta$ goes to 0 faster than $\delta$ goes to 0 (thus while $M(\delta)<\infty$). Therefore, this error term vanishes as $\delta$ gets small enough, and we have that $\{S_n\}\rightarrow \Hd^{1}(A)$.

Letting $n\rightarrow \infty$ above amounts to letting $\delta\rightarrow0$, which forces $\theta, \lambda \rightarrow0$ by our construction above. This gives

\begin{align*}\label{result}
\lim_{\delta\rightarrow0}\left\{\frac{2(1-\lambda)}{\cos(\theta)}\sum_{i=1}^{M(\delta)}\Hd^{1}(T_i)\right\}
=2\Hd^{1}(A).
\end{align*}

\noindent Thus we have shown that 
$$\lim_{k\rightarrow \infty}\left\{ \frac{1}{k}\int_{B(0,2R)}\left(\frac{|Df|}{|f|}\right)^{\frac{k-1}{k}}dx\right\}\leq2\mathcal{H}^{1}(A).$$

\item Using an analogous reasoning with the bound in (\ref{etaxlower}), we obtain the desired result.

\end{enumerate}
\end{proof}

\begin{rem}
The next reasonable step is to consider whether Theorem \ref{C1} holds for rectifiable boundaries. While it holds for polygonal boundaries, a relatively simple example shows that simply being rectifiable is not enough (see the Appendix). What about for the boundaries of closed sets of finite perimeter? This question is more delicate and seems best to start with closed connected sets of finite perimeter, whose boundaries are also connected, such that the topological boundary equals the measure theoretic boundary. Let $E$ be such a set. By Theorem 2.7 in \cite{evans-2015-measure}, we see that if $B(x,r)$, $r>0$, is a ball centered on $x\in \partial E$, then $\partial E \cap B(x,r)$ must be quasi-graph like. That is, $\partial E$ comes into the ball once, passes through $x$, and leaves the ball once. A sensible next step is to prove Theorem \ref{C1} holds for such a set $E$ whose boundary is locally Lipschitz, i.e. locally $\partial E$ is the graph of a Lipschitz function.  We leave this for future work. 

\end{rem}
\section{Appendix}
\begin{exm}
Here we construct an example that shows that Theorem \ref{C1} does not hold in general for sets with rectifiable boundaries. Let $S$ be the open unit square in $\R^2$ and let $B(r)$ be the disc of radius $r>0$.  First, we construct a sequence $\{r_m\}$ with the property such that $$\frac{|B(r_{m+1})| +|B(r_{m+2})|+|B(r_{m+3})|+\dots}{|B(r_m)|}\rightarrow 0\, \,  \, \text{as } m\rightarrow \infty.$$ 

Start with $\{s_k\}=\frac{1}{2},\frac{1}{4},\frac{1}{8},\dots$ and let  $\{\epsilon_m\}=\frac{1}{2},\frac{1}{4},\frac{1}{8},\dots$. From $\{s_k\}$, we want to construct a sequence $\{s_{k(m)}\}\equiv\{r_m\}$ such that for each $m$ $$\frac{|B(r_{m+1})| +|B(r_{m+2})|+|B(r_{m+3})|+\dots}{|B(r_m)|} < \epsilon_m,$$ 
 and then let $\{\epsilon_m\}\rightarrow 0.$ Let $ \{r_1\}=\{s_1\}=\frac{1}{2}$ be the first point in $\{r_m\}$, but let $ \{r_2\}=\{s_3\}=\frac{1}{8}$ be the second point. This gives
 $$\frac{\frac{1}{8^2}+\frac{1}{16^2}+\frac{1}{32^2}+\dots }{\frac{1}{2^2}} < \frac{1}{2},$$
which follows from the fact that without squaring, we have equality, and squaring only reduces the LHS. 
 
 Now let $ \{r_3\}=\{s_6\}=\frac{1}{64}$ be the third point. Note that the previous inequality is improved by removing $\frac{1}{16}$ and $\frac{1}{32}$. This time, for $m=2$, we have  $$\frac{\frac{1}{64^2}+\frac{1}{128^2}+\frac{1}{256^2}+\dots }{\frac{1}{8^2}} < \frac{1}{4}.$$ We continue in this way, skipping first one point of $ \{s_k\}$, then two, then three, and so on, so that  $\{r_m\}=\frac{1}{2}+\frac{1}{8}+\frac{1}{64}+\frac{1}{1024}+\dots.$ In general, for the $m$th point, we choose $r_m=s_{(\frac{m^2+m}{2})}$. Observe that picking any subsequence $\{r_{m_j}\}\subset\{r_m\}$ also has the above property.

Now,  we enumerate the rational points in $S$ as $\{q_j\}_{j=1}^\infty$, pick the first rational point $q_1$, and draw the closed disc $B(q_1,r_{m_1})$ with $r_{m_1}$  chosen from $\{r_m\}$ such that $B(q_1,r_{m_1})\cap \partial S =\emptyset$ . Let  $C_1\equiv\partial B(q_1,r_{m_1})$. Pick the next  rational $q_2$. If $q_2\notin C_1$, we pick a smaller radius $r_{m_2}<r_{m_1}$ from our sequence $\{r_m\}$ so that
$$B(q_2,r_{m_2})\cap \{C_1 \cup \partial S\}=\emptyset,$$ and 
$$3r_{m_2}<d(q_2, C_1 \cup \partial S),$$ where $d(q_2, C_1 \cup \partial S)$ is the distance function from the point $q_2$ to the set $C_1 \cup \partial S$. If $q_2\in C_1$, then we perturb $q_2$ slightly off of $C_1$ so that our new point $\oldhat{q}_2\notin C_1\cup \partial S.$ We  pick $r_{m_2}<r_{m_1}$ so that
$$B(\oldhat{q}_2,r_{m_2})\cap \{C_1 \cup \partial S\}=\emptyset,$$ and 
$$3r_{m_2}<d(\oldhat{q}_2, C_1 \cup \partial S).$$

Define $\mathcal{B}_1\equiv B(q_1,r_{m_1})$ and let the following also hold with $\oldhat{q}_2$ in place of $q_2$. If the disc $B(q_2,r_{m_2})\subset \mathcal{B}_1$, then we let $B^o(q_2,r_{m_2})$ be the open disc, and we replace $\mathcal{B}_1$ with $$\mathcal{B}_2\equiv \mathcal{B}_1\setminus B^o(q_2,r_{m_2}).$$ Otherwise, $$\mathcal{B}_2\equiv \mathcal{B}_1\cup B(q_2,r_{m_2}).$$

Letting  $C_j\equiv\partial B(q_j,r_{m_j})$, we can repeat this process for any rational point in our list so that if $q_j\notin \cup_{i=1}^{j-1}C_i$, we pick  $r_{m_j}<r_{m_{j-1}}$ from  $\{r_m\}$ so that $$B(q_j,r_{m_j})\cap \{ \cup_{i=1}^{j-1}C_i \cup \partial S\}=\emptyset,$$ and $$3r_{m_j}<d(q_j, \cup_{i=1}^{j-1}C_i \cup \partial S).$$

If $q_j\in \cup_{i=1}^{j-1}C_i$, then we perturb $q_j$ in the following way. Draw the ball $B(q_j,r_{m_{j-1}})$, which by construction will only intersect the circle containing $q_j$. Since we have a finite number of circles in $\mathcal{B}_{j-1}$ for all $j$, we know that $\cup_{i=1}^{j-1} C_i\cup \partial S$ is a closed set and thus its   complement is open. Then we can pick a point $\oldhat{q}_j\in B(q_j,r_{m_{j-1}})$ in the complement and a radius $r_{m_j}$ from our sequence such that  $r_{m_j}<{r_{m_{j-1}}}$   and $$3r_{m_j}<d(\oldhat{q}_j,\cup_{i=1}^{j-1} C_i \cup \partial S).$$ 

Again, letting the following also hold with $\oldhat{q}_j$ in place of $q_j$, if $$B(q_j,r_{m_j})\subset \mathcal{B}_{j-1},$$ then we replace $\mathcal{B}_{j-1}$ with $$\mathcal{B}_j\equiv \mathcal{B}_{j-1}\setminus B^o(q_j,r_{m_j}).$$ 

Otherwise, $$\mathcal{B}_j\equiv \mathcal{B}_{{j-1}}\cup B(q_j,r_{m_j}).$$ 

Continuing in this way, we construct a set $\mathcal{B}$ such that $S\subseteq \clos({\mathcal{B}})$ and $\partial^* \mathcal{B}\equiv \cup_{i=1}^\infty C_i $, where $\partial^* \mathcal{B}$ is the reduced boundary of $ \mathcal{B}$. 

Lastly, we pick a point $x$ on any circle $C_i$, pick $r_{m_j}$ smaller than the radius of $C_i$, and draw $B(x,r_{m_j})$. We know by construction that $B(x,r_{m_j})\cap C_k=\emptyset$ as long  as $i\ge j$, except when $k=i$. Therefore $\mathcal{B}\cap B(x,r_{m_j}) \cap B(q_{p}, r_{m_{p}})$ is either  $\mathcal{B}\cap B(x,r_{m_j})$ or it is the empty set so long as $p\ge j$ and $p\neq i$. We see that the density ratios computed to get the approximate normal are perturbed only by discs of radius $r_{m_{j+n}}$, where $n\ge 1$. Using our subsequence $\{r_{m_j}\}$ and the property of our ratio above, we have that $$\frac{|B(r_{m_{j+1}})| +|B(r_{m_{j+2}})|+|B(r_{m_{j+3}})|+\dots}{|B(x,r_{m_j})|} <\epsilon_{m_j}.$$ Letting ${m_j} \rightarrow \infty$, we see that $x$ is on the reduced boundary  of the set of circles $\cup_{i=1}^\infty C_i $ since there exists an approximate normal there. By construction, $\mathcal{B}$ is a set of finite perimeter and is thus rectifiable by Theorem 5.15 in \cite{evans-2015-measure}. Thus every point in $S$ is a limit point of $\cup_{i=1}^\infty C_i $, which implies the distance function from any point in $S$ to $\cup_{i=1}^\infty C_i $ is 0 and Theorem \ref{C1} does not hold.

\end{exm}


      



      



\bibliographystyle{plain}
\bibliography{boundarymeasure.bib}


\end{document}